\documentclass[11pt]{amsart}
\usepackage{amssymb, amsmath}
\usepackage{amsthm}
\usepackage{amscd}
\usepackage{graphicx}

\newtheorem{theorem}{Theorem}
\newtheorem{lemma}[theorem]{Lemma}
\newtheorem*{theoremA}{Theorem A}
\newtheorem*{theoremA'}{Theorem A'}

\newtheorem*{lemmaB}{Lemma B}

\theoremstyle{definition}

\newtheorem*{question}{Question}

\newtheorem*{remark}{Remark}
\newtheorem*{acknowledgement}{Acknowledgement}

\title{Non-separating immersions of spheres and Bing houses}

\author{Michael Freedman}
\address{\hskip-\parindent
Michael Freedman\\
    Microsoft Research, Station Q, and Department of Mathematics \\
    University of California, Santa Barbara \\
    Santa Barbara, CA 93106 
}
\author{T. T$\hat{\mathrm{a}}$m Nguy$\tilde{\hat{\mathrm{e}}}$n-Phan}
\address{\hskip-\parindent
T. Tam Nguyen-Phan\\
Max Planck Institut f\"ur Mathematik\\
Bonn\\
Germany}
\email{tam@mpim-bonn.mpg.de}

\def\ra{\rightarrow}

\def\beqa{\begin{eqnarray}}
\def\eeqa{\end{eqnarray}}
\def\beqa{\begin{eqnarray}}
\def\eeqa{\end{eqnarray}}

\def\R{\mathbb{R}}

\oddsidemargin=0in
\evensidemargin=0in
\textwidth=41em

\input xy
\xyoption{all}

\begin{document}

\maketitle

\section{Introduction}
It is a common topology homework exercise to prove that any embedding of an orientable $n$-manifold $M$, such as the $n$-sphere $\mathbb{S}^{n}$, into $\mathbb{R}^{n+1}$ is separating\footnote{That is, the complement of the image of the embedding has at least two components.}. However, we have never seen this homework problem posed with ``embedding" replaced by ``immersion". At first glance it only seems to  complicate the problem for no good reason. But the truth is subtle, with topological and PL immersions behaving differently from their smooth cousins. Topological and PL immersions do come up in ``nature" so one cannot avoid them, and when one does not avoid them one discovers some interesting topology. This paper is about immersions of spheres but we will open with a geometry tale.

\subsection*{A geometry tale}
One place to look for an immersion of an $n$-sphere is at the \emph{cut locus}\footnote{The \emph{cut locus} of $p$ is the set of \emph{cut points} of $p$ along all the geodesics starting from $p$. A \emph{cut point} of $p$ along a geodesic ray $\gamma_v$ with initial velocity $v$ is the point $\gamma_v (t_0)$, where $t_0$ is the first time after which $\gamma_v([0,t])$ stops being the minimizing geodesic from $p$ to $\gamma_v (t)$.} of a point $p$ in some closed, Riemannian manifolds $M^{n}$. Not any Riemannian manifold, but those that have small diameter and small upper bound on the sectional curvatures. In such circumstances, the cut locus of $p$ is the image of an (topological) immersion of a $(n-1)$-sphere whose complement is connected. We give an short explanation of this in the next paragraph.

Recall that if the sectional curvature of $M$ is bounded from above by some number $\kappa >0$, then by comparing to the round sphere of radius $(1/\sqrt{\kappa})$ one deduces that the exponential map $\exp_p$ at $p\in M$ is a local diffeomorphism on the ball of radius $(\pi/\sqrt{\kappa})$. Suppose (for simplicity) that $M$ has both the curvature and diameter $\leq 1$.
Then for each unit tangent vector $v \in T_p(M)$, if one follows the unit-speed geodesic ray $\gamma_v$ starting at $p$ in the direction of $v$, one reaches at some time $t=t_v \leq 1$ a \emph{cut point} $f(v) =\gamma (t_v)$ along $\gamma$. Letting $v$ vary over the unit sphere in $T_p(M)$ and taking into account $t_v$, one obtains a  \emph{star-shaped}\footnote{``Star-shaped" means a sphere about the origin of $\mathbb{R}^{n}$ which in polar coordinates has the form $\{(\Omega, \rho(\Omega) )\}$ for some continuous function $\rho\colon \{\text{angles}\} \rightarrow\mathbb{R}^+$. Such embedded spheres have bicollared neighborhoods and thus are topologically flat.} \emph{locally flat} embedding $\mathbb{S}^{n-1}$ into $T_pM$, namely the set $ C: = \{t_v v\; |\; v \in T_p^1(M)\}$, which bounds a topologically flat ball $B$ on which the map $\exp_p$ is injective. Since $\exp_p$ is a local diffeomorphism on a ball of radius $\pi$, when restricted to $C =\partial B = \mathbb{S}^{n-1}$ it gives a topologically flat immersion  
\[f\colon \mathbb{S}^{n-1} \rightarrow M\] 
whose image is exactly the cut locus of $p$, the complement of which is $\exp_p(\text{int}(B))$, which is diffeomorphic to an open ball and thus is connected. 

This was how Buser and Gromoll (\cite{BuserGromoll}) showed that $\mathbb{S}^2$ does not admit a metric with small curvature\footnote{By ``small curvature" we mean that the sectional curvatures have a small upper bound. The lower bound of the sectional curvature can be very negative. } and small diameter. If $\mathbb{S}^2$ admits such a metric, then one could use this metric to obtain an non-separating immersion of $\mathbb{S}^1$ into $\mathbb{S}^2$, which, as a consequence of the Jordan Curve Theorem, is impossible. 

In the same paper, Buser and Gromoll explained how Gromov constructed metrics on $\mathbb{S}^3$ with upper bounds on the  curvature $\kappa$ and diameters approaching zero. Closed manifolds that admit such metrics are called \emph{almost negatively curved}. It is interesting that $\mathbb{S}^3$ is almost negatively curved, but for our purpose this implies there is a non-separating immersion of $\mathbb{S}^2$ into $\mathbb{S}^3$, in fact one with contractible image. Thus the modification of the homework problem above has a surprising answer (in topological and PL categories): It would be a ``trick question". So we ask,

\begin{question}
Are there PL immersions of $\mathbb{S}^{n-1}$ into $\mathbb{R}^{n}$ , for each $n\geq 3$, that have contractible image and in particular are non-separating? If so, how can we obtain them?
\end{question}

The metrics on $\mathbb{S}^3$ described in \cite{BuserGromoll} are obtained by modifying the round metric by installing a number of ``short-cuts" that make the diameter small without changing the topology or increasing the curvature. These ``short-cuts" are not at all like the worm holes of a science-fiction, which are easy plot device installed at the expense of changing the topology. They are much more delicate in that they preserve the topology of the manifold. The resulting metrics are stranger than fiction and there is little hope in extracting an explicit immersion of $\mathbb{S}^2$ from them. However, back in the 1950's, such immersions of $\mathbb{S}^2$ had already been discovered, apparently by R. H. Bing.

\subsection*{The house that Bing built}
Topologists are often introduced to the idea of a \emph{contractible}, but not \emph{collapsible}, complex through the example of ``the house with two rooms", also called ``the house that Bing built", which is the 2-complex $X\subset \mathbb{R}^3$ as drawn in Figure 1. 

\begin{figure}[h!]
\label{2d-bing}
\centering
\includegraphics[scale=0.43]{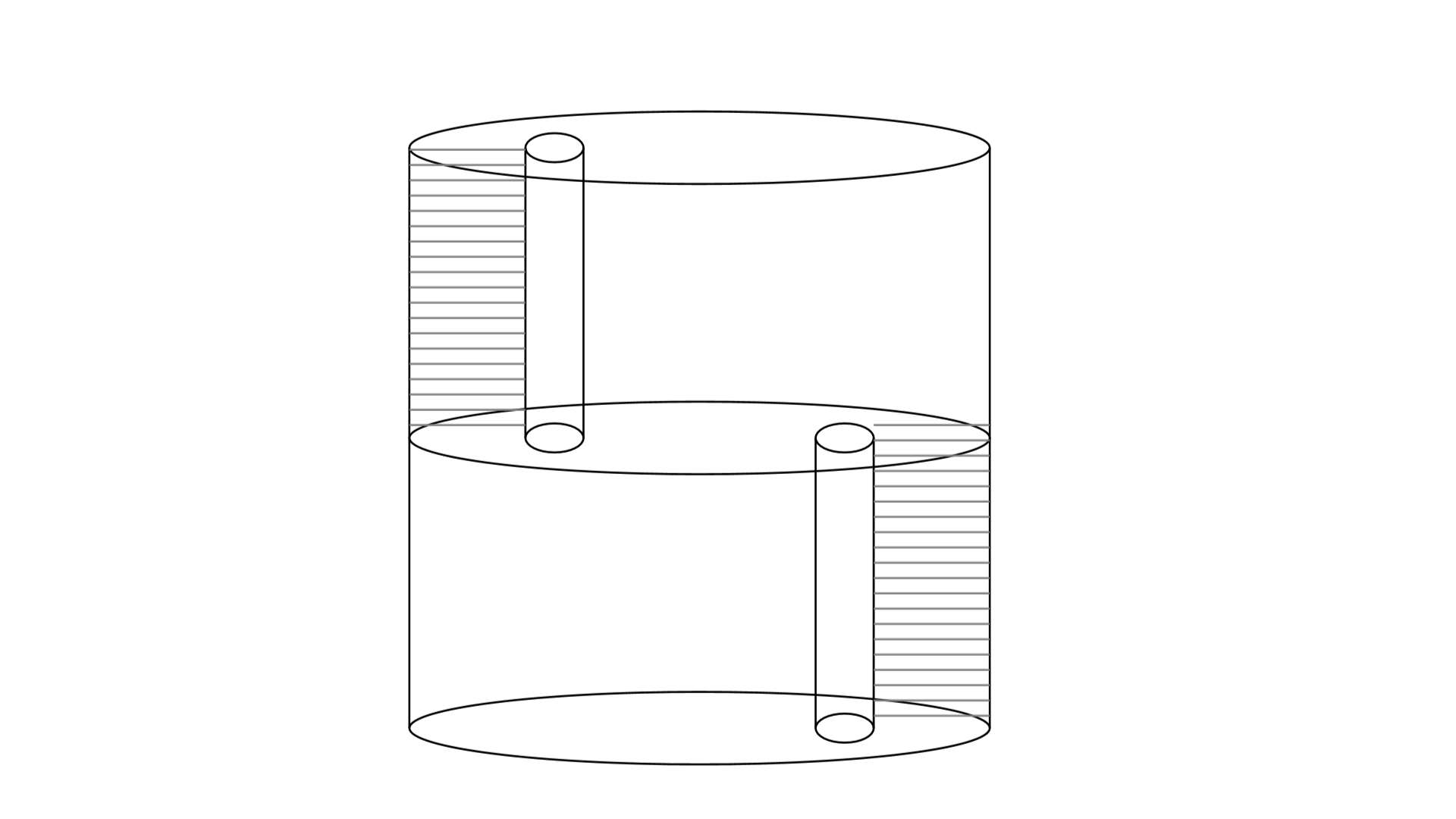}
\caption{The house with two rooms.}
\end{figure}

This 2-complex $X$ has a regular neighborhood $\mathcal{N}(X)$ that is P.L. homeomorphic to $\mathbb{D}^3$ and possesses a mapping cylinder structure $\mathcal{N}(X) \cong \partial\mathcal{N}(X) \times [0,1] / n \times 1 \equiv n' \times 1$ iff $f(n) = f(n')$, where the map $f\colon\partial \mathcal{N}(X) \rightarrow X$ is required to be a PL immersion (of $\mathbb{S}^2$). In particular, pushing along the mapping cylinder lines yields a strong deformation retraction $r_t: \mathcal{N}(X) \ra X$, $t \in [0,1]$, with $r_1$ the retraction. To see the non-separating immersion $f\colon \mathbb{S}^2\rightarrow \mathbb{R}^3$, imagine shrink-wrapping Figure 1 with a very flexible spherical membrane that will enter the long tubes and then stretch to fill the chambers to which the tubes lead.

In higher dimension we call an $(n-1)$-complex $Y^{n-1} \subset \mathrm{int}(\mathbb{D}^n)$ a \emph{Bing house} if its PL regular neighborhood $\mathcal{N}(Y^{n-1})$ is a PL $n$-ball. This implies the existence of a collapse $\mathbb{D}^n \searrow Y^{n-1}$ and, of course, a strong deformation retract $r_t \colon \mathcal{N}(Y) \ra Y$. As a consequence, $\mathcal{N}(Y^{n-1})$ possesses a mapping cylinder structure induced by some PL immersion $f\colon \partial\mathcal{N}(Y^{n-1}) \rightarrow Y^{n-1}$. Clearly, a $(n-1)$-dimensional Bing house gives a non-separating immersion $\mathbb{S}^{n-1} =  \partial\mathcal{N}(Y^{n-1})\rightarrow\mathbb{R}^n$.

\subsection*{Difficulties in higher dimensions}
When the dimension $n$ is greater than 3, there are several ways to attempt to generalize the Bing house, but usually we found ourselves leaving the world of PL immersions and instead building more singular maps. The main difficulty of the naive approach, which will become apparent Section \ref{sec: inductive argument}, revolves around a 2-disk of singular image points (i.e. points over which the map does not have full rank); a problem already present in dimension $n=4$. The map obtained from the naive approach might look, at places, like the projection $\mathbb{D}^2\times I \rightarrow \mathbb{D}^2$, which are problematic because it is unclear how to perturb away these singularities. 

Another attempt would be to look for these immersions in places in ``nature" where they might show up. So let us return to the geometry tale but this time we will look in higher dimensions.
It turns out that the $4$-sphere is also almost negatively curved (\cite{GGNP}). But again, seeing the 3-dimensional Bing house inside seems hopelessly difficult.

The goal of this paper is to give explicit constructions of higher dimensional Bing houses and PL immersions with contractible images of $(n-1)$-spheres for all $n\geq 3$ . 
\begin{theorem}\label{main theorem}
For every $n \geq 3$, there is a Bing house $Y^{n-1} \subset \mathbb{D}^n$.
\end{theorem}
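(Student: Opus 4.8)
The plan is to argue by induction on $n$. The base case $n=3$ is Bing's classical house with two rooms, whose defining property (regular neighborhood a PL $3$-ball, with the boundary $\mathbb{S}^2$ immersing onto the spine) is classical. The real content is the inductive step: from a Bing house $Y^{n-1}\subset\mathbb{D}^n$ produce a Bing house $Y^{n}\subset\mathbb{D}^{n+1}$. Here I would strengthen the inductive hypothesis to carry along not just the bare fact that $\mathcal{N}(Y^{n-1})$ is a ball, but the immersion $f\colon\mathbb{S}^{n-1}=\partial\mathcal{N}(Y^{n-1})\to Y^{n-1}$ and the mapping cylinder identification $\mathcal{N}(Y^{n-1})\cong\mathbb{D}^n$; it is precisely this structure, rather than the homeomorphism type of $\mathcal{N}$ alone, that must be propagated.

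The naive attempt is the product $Y^{n-1}\times I\subset\mathbb{D}^n\times I=\mathbb{D}^{n+1}$ (after pushing it into the interior), or the slight variant obtained by adjoining a transverse copy $\mathbb{D}^n\times\{\tfrac12\}$ of the disk. In either case $\mathcal{N}\cong\mathcal{N}(Y^{n-1})\times I\cong\mathbb{D}^{n+1}$, so this is literally a ball; the trouble is the mapping cylinder map $\mathbb{S}^n=\partial\mathcal{N}\to\text{spine}$. The two ``lids'' $\mathbb{D}^n\times\{0,1\}$ and the ``side'' $\mathbb{S}^{n-1}\times I$ of $\partial\mathcal{N}$ have nowhere to be pushed except by a vertical projection onto a lower-dimensional part of the spine, producing exactly the parametrized model map $\mathbb{D}^2\times I\to\mathbb{D}^2$ flagged in the introduction: a disk of image points at which the retraction drops rank, with no evident perturbation available. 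So the product is not good enough.

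The crux of the inductive step is therefore to replace this naive spine by an honest $n$-dimensional analog of the two-room house, built \emph{out of} $Y^{n-1}$ (equivalently, out of the mapping cylinder of $f$) rather than out of a disk. Concretely, I would retain the ``walls'' $Y^{n-1}\times I$, whose neighborhood-boundary is pushed in by $f\times\mathrm{id}$ (an immersion, since $f$ is), and then re-route the collapse near the lids and the side so that every top-dimensional portion of $\partial\mathcal{N}$ is absorbed either by such an $f\times\mathrm{id}$ or by a diffeomorphism onto an interior copy of $\mathbb{D}^n$, and never by a projection — threading $\mathbb{D}^{n+1}$ into its ``chambers'' through ``tubes'' modeled on the mapping cylinder of $f$, just as one shrink-wraps Figure~1 through its chimneys. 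Reconciling the conflicting requirements at the spheres where tubes meet chambers is what forces the two-room topology: one is led to (at least) two chambers with entrances on opposite sides together with interior walls having no free faces, so that the complex collapses off $\mathbb{D}^{n+1}$ yet its boundary sphere still immerses.

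The step I expect to be the main obstacle is verifying, for this modified complex, both halves of the conclusion at once: (i) that $\mathcal{N}(Y^n)$ is still a PL $(n+1)$-ball, i.e. that the re-routed collapse $\mathbb{D}^{n+1}\searrow Y^n$ is genuinely valid, which amounts to producing enough free faces at each stage despite the interior walls; and (ii) that the resulting map $\partial\mathcal{N}(Y^n)\to Y^n$ is a bona fide PL immersion, with the rank-dropping $\mathbb{D}^2\times I\to\mathbb{D}^2$ locus of the product construction eliminated entirely. Carrying out (i) and (ii) in a single explicit model, uniformly in $n$, is the technical heart of the argument; once that is done the induction closes, since $Y^n$ then comes equipped with its own immersion and mapping cylinder structure, ready to feed the next step.
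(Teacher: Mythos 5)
There is a genuine gap: your argument correctly diagnoses the obstruction but stops exactly where the work begins. You identify that the naive thickening $Y^{n-1}\times I$ produces a mapping-cylinder map that degenerates to a rank-two projection $\mathbb{D}^2\times I\to\mathbb{D}^2$ on a $2$-disk of the spine, and you assert that ``re-routing the collapse'' through tubes modeled on the mapping cylinder of $f$ will absorb $\partial\mathcal{N}$ immersively. But you give no construction that actually eliminates the rank-dropping locus, and you explicitly concede that verifying both (i) the collapse and (ii) the immersion property in a single explicit model ``is the technical heart of the argument.'' That heart is missing, so what you have is a correct statement of the problem, not a proof of the theorem.

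It is worth contrasting with how the paper actually closes this gap, because the mechanism is not a variant of your plan. First, the paper does not perform a one-dimensional induction $Y^{n-1}\Rightarrow Y^n$; the $n=4$ case (the Bing house $Y^3$) is built explicitly from scratch by an elaborate $9$-piece decomposition verified with Kirby calculus, and it is \emph{not} derived from the classical $Y^2$. Second, the inductive route isolates exactly the singularity you flagged into a new intermediate object, a ``rank-$2$ Bing hut'' $Z^{n-1}\subset\mathbb{D}^n$: a spine whose mapping cylinder map is a PL immersion away from finitely many distinguished belt regions $\mathbb{D}^2\times\mathbb{S}^{n-3}$, on which it is the rank-two projection to $\mathbb{D}^2$. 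Such huts exist in every dimension $n\geq 3$ by a direct construction (Lemma~\ref{rank-2-bh}). The key trick (Lemma~\ref{making-bh}) is then to \emph{repair} each singular disk $D_s^2$ by replacing it with $D_s^2\times Y^{n-3}$, i.e.\ by inserting the mapping-cylinder structure of a Bing house two dimensions lower fiberwise over the rank-two projection; this restores the immersive property and yields $Y^{n-1}$. Because the repair consumes a Bing house of dimension $n-3$, the induction advances by \emph{two} dimensions and therefore needs both $Y^2$ and the separately constructed $Y^3$ as base cases. Your outline has neither the rank-$2$ hut abstraction nor the $\mathbb{D}^2\times Y^{n-3}$ insertion, and without some such mechanism the rank-dropping $\mathbb{D}^2$-locus you correctly identified remains unresolved.
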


\begin{remark}
Note that the theorem fails for $n = 2$, but again holds for $n=1$. 
\end{remark}

\subsection*{A cautionary example}
The suspension $\Sigma$ of the Bing house does not give an immersion of $\mathbb{S}^3$ since the retraction map has singularities (points not covered by an immersion) corresponding to the two cone points of $\Sigma$.
\newline

This paper is organized as follows. In Section \ref{sec: 3d Bing house} we give an explicit construction of a 3-dimensional Bing house. In section \ref{sec: inductive argument} we will describe two distinct routes to higher dimensional generalizations. One is to simply adapt the 3-dimensional construction to higher dimensions. The second is an inductive construction which yields a $Y^{n+2}$ from a $Y^n$. Given $X := Y^2$ and $Y^3$ as starting points, all dimensions $n \geq 2$ are thus accounted for.  

\begin{acknowledgement}
The second author is thankful to be at the Max Planck Institut f\" ur Mathematik in Bonn. She is grateful for its hospitality and financial support.
\end{acknowledgement}

\section{A $3$-dimensional Bing house}\label{sec: 3d Bing house}
In this section, we give an explicit construction of a 3-dimensional Bing house $Y^3$. Let us reserve $X$ for the classic 2-dimensional Bing house drawn in Figure 1. The deformation retraction of $\mathbb{D}^3$ to $X$ may be visualized as: ``insert two balloons through the tubes and inflate."
\newpage

Inverting about a point on the ``middle floor" the Bing house may alternatively be rendered in $\mathbb{S}^3$ as:
\begin{figure}[h!]
\label{2d-bing-inverted}
\centering
\includegraphics[scale=0.46]{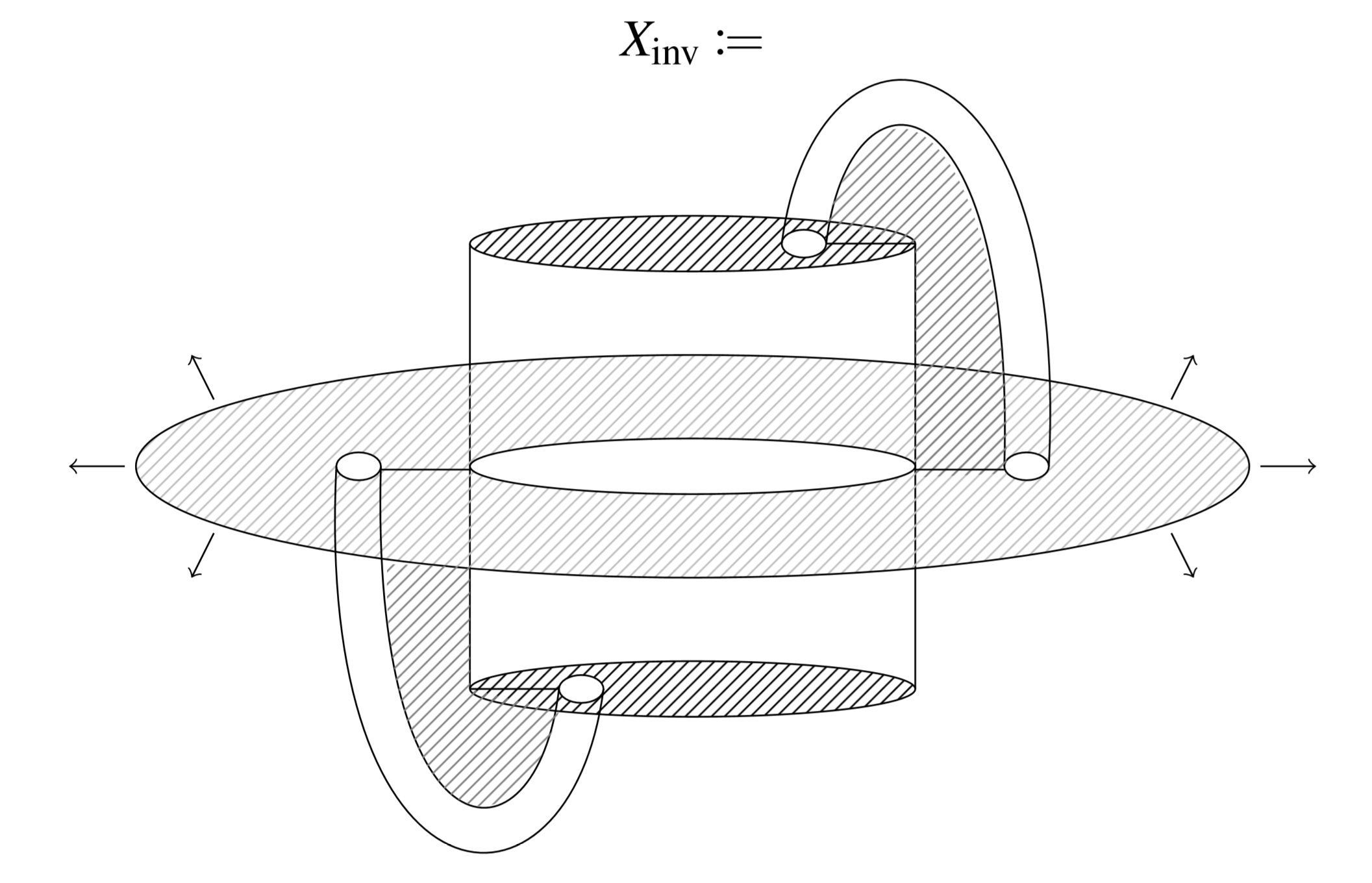}
\caption{}
\end{figure}

Our job now is to make precise a certain $3$-dimensional generalization of Figure \ref{2d-bing-inverted} which we sketch schematically in Figure 3.

\begin{figure}[h!]
\label{schematic}
\centering
\includegraphics[scale=0.55]{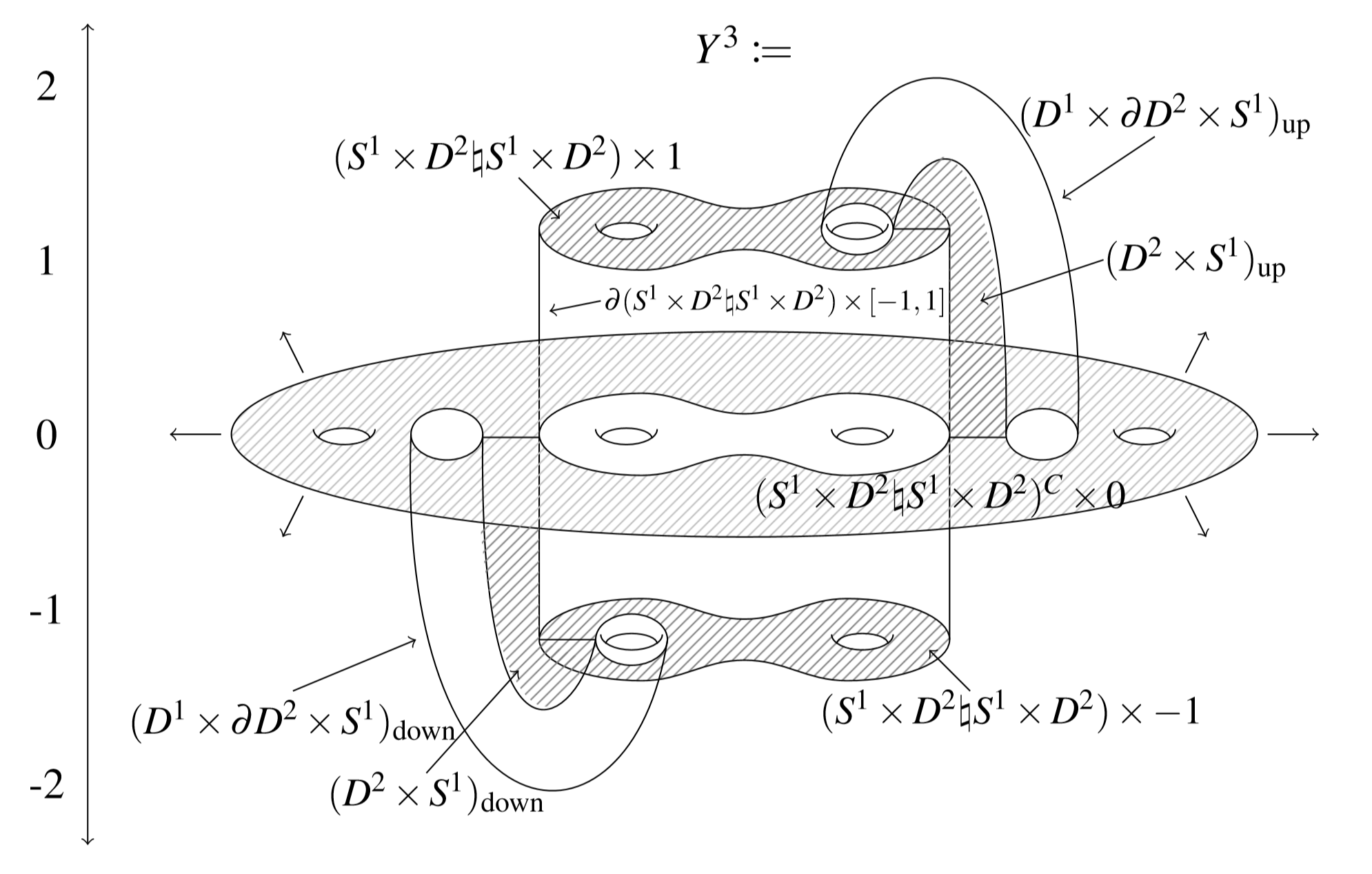}
\caption{}
\end{figure}

Here in text is a brief description of the $3$-dimensional ``pieces" of $Y^3$. The drawing in Figure 3 is in $\mathbb{S}^3 \times [-2, 2]$. Locate a genus 2 Heegaard surface $\Sigma$ in $\mathbb{S}^3 \times 0$. 
\newpage

\noindent
The first three pieces of $Y^3$ are: 
\begin{itemize}
\item[(1 $\&$ 2)] the \emph{interior} handle bodies to $\Sigma \times \{\pm 1\} \subset \mathbb{S}^3 \times \{\pm 1\}$ each with a (different) standard basis circle neighborhood deleted,
\item[(3)] and the exterior handle body to $\Sigma \times \{0\} \subset \mathbb{S}^3 \times \{0\}$ with two trivial (and unlinked) circle neighborhoods deleted. 
\item[(4 $\&$5)] The next two pieces are drawn as tubes (and we call them \emph{tubes}) but they are actually tubes cross circle $((\mathbb{D}^2 \times \partial \mathbb{D}^2) \times \mathbb{S}^1)_{\text{up}/\text{down}}$, where the extra circle factor parametrizes, longitudinally, the total of 4 deleted circles within the first three pieces.
\item[($6_{-}$ $\&$ $6_+$)] Next, add the vertical wall $\Sigma \times [-1, 1]$, which is actually two pieces $6_{-} = \Sigma\times[-1,0]$ and $6_{+} = \Sigma\times[0,1]$.
\item[(7 $\&$ 8)] Finally, the two pieces rendered as vertical disks are actually $\mathbb{D}^2 \times \mathbb{S}^1$'s with the extra circle again corresponding to the longitudinal parametrization of the 4 deleted circles.
\newline
\end{itemize}

So in total we have $3 + 2 + 2 + 2 = 9$ pieces, which we have labeled 1, 2, 3, 4, 5, $6_{-}$, $6_+$, 7 and 8. Each of these pieces, in its interior, will be covered $2\rightarrow 1$ by the map $r_1\colon \mathbb{S}^3 \rightarrow Y^3$. That is, implicitly we will be patterning $\mathbb{S}^3$ into 18 puzzle pieces which glue up (immersively) to form $Y^3$.
For reference, here are the pieces:

\begin{figure}[h!]
\label{}
\centering
\includegraphics[scale=0.51]{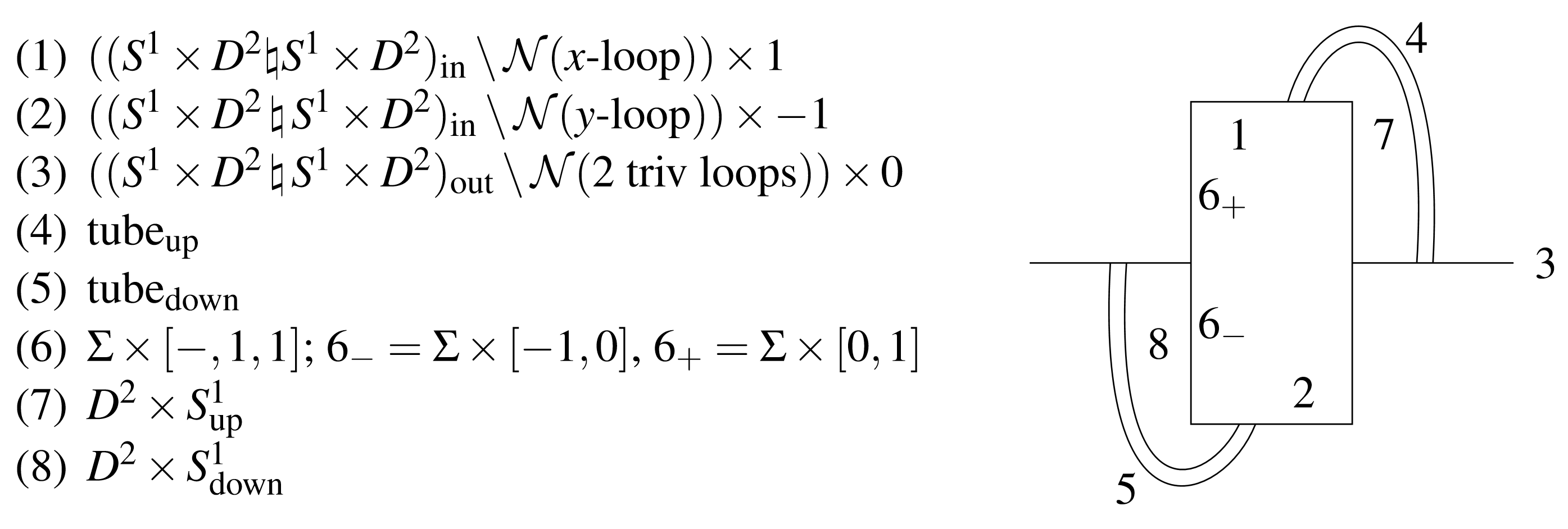}
\caption{}
\end{figure}

\bigskip 

Next, let us make it completely explicit where pieces 4, 5 and 7, 8 are located in $\mathbb{S}^3 \times [-2,2]$. By symmetry we only need to discuss pieces 4 and 7 (tube and $(\text{disk} \times \mathbb{S}^1)$), as pieces 5 and 8 are mirror images of these. Call $\tilde{S}^3$ the PL embedded copy of $\mathbb{S}^3$ consisting of 
\[((\text{inside } \Sigma) \times 1)\; \cup \;(\Sigma \times [0,1])\; \cup\; ((\text{outside } \Sigma) \times 0).\] 
In $\tilde{S}^3$ we may draw the boundary $\partial(\text{tube}_4)$ of $\text{tube}_4$:

\begin{figure}[h!]
\label{}
\centering
\includegraphics[scale=0.44]{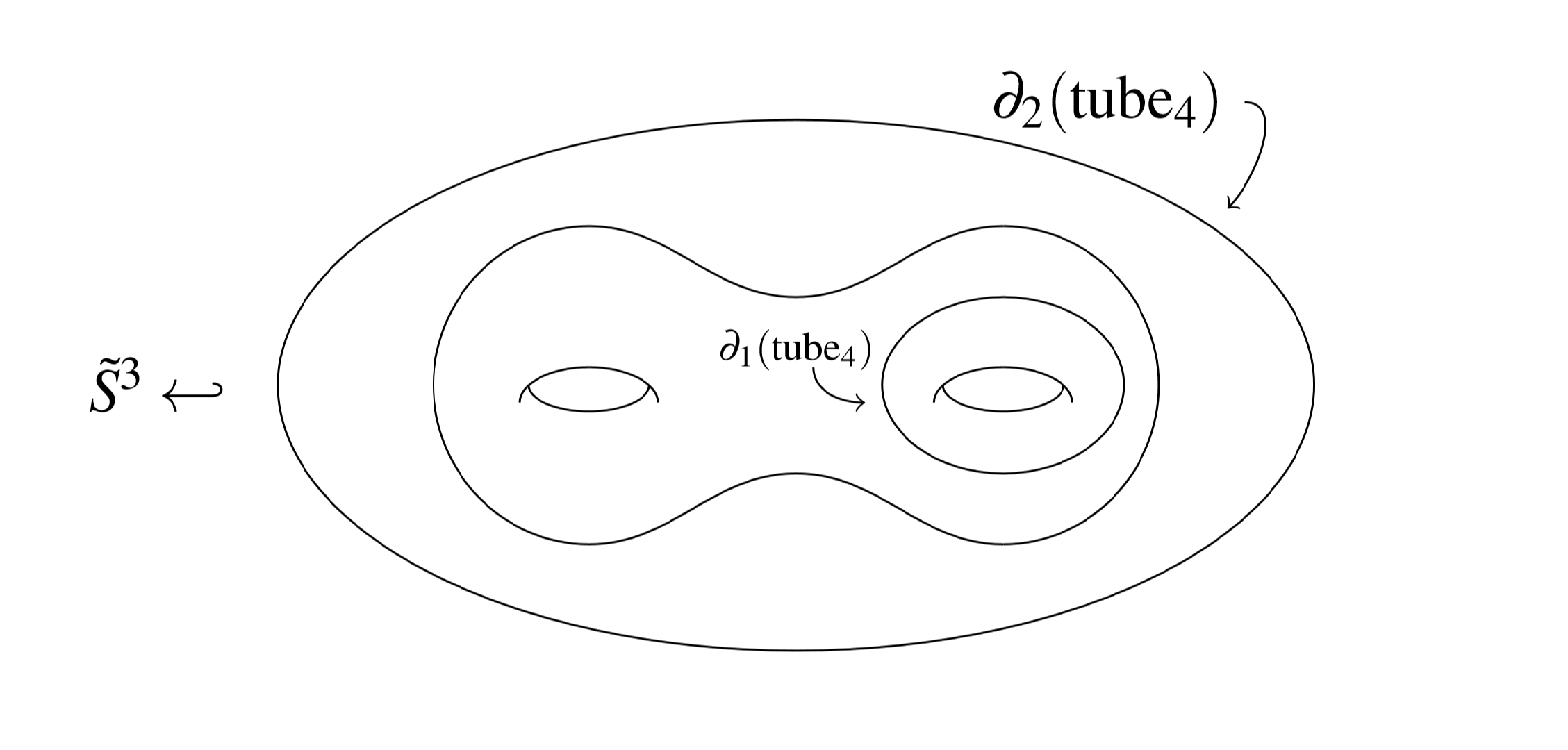}
\caption{}
\end{figure}

\[\;\]
\begin{figure}[h!]
\label{tube-2}
\centering
\includegraphics[scale=0.5]{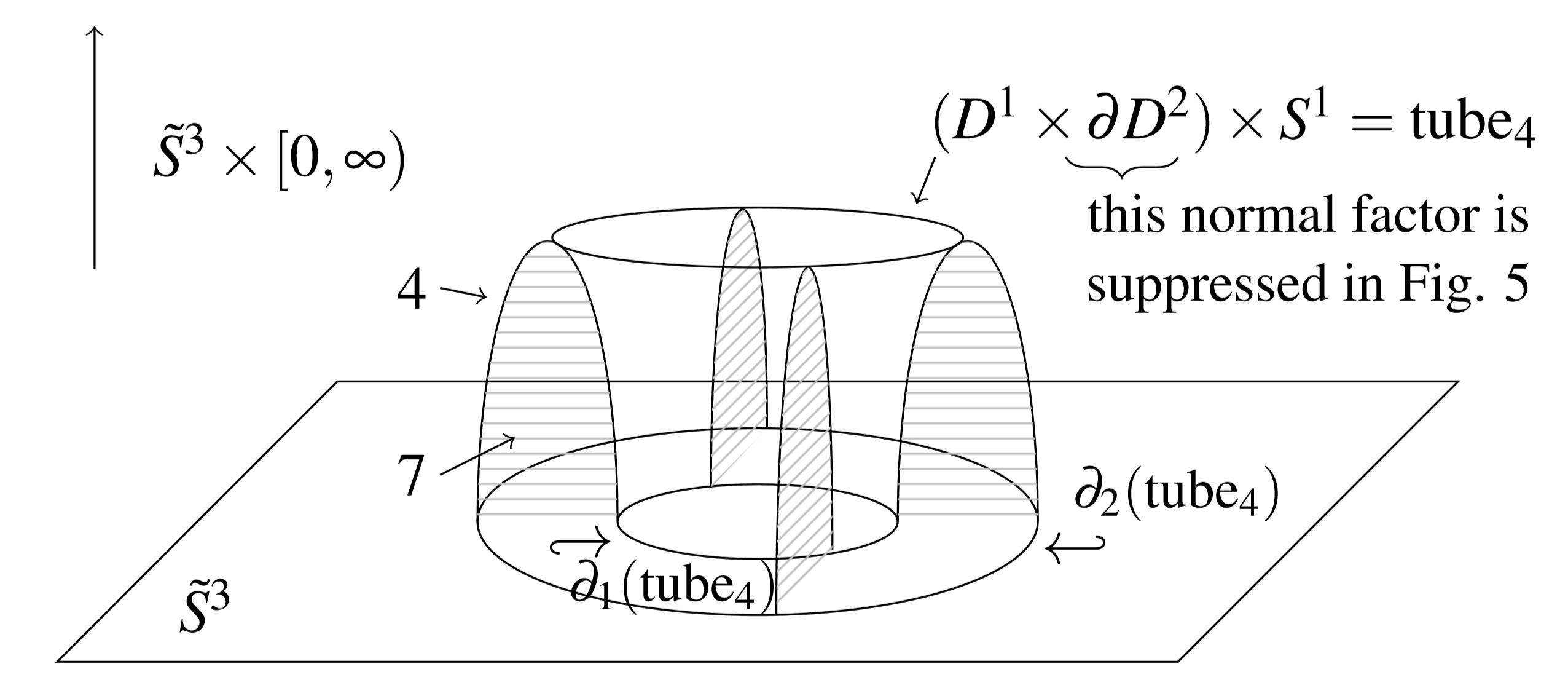}
\caption{The horizontal shades of the ``hoop" in Figure 6 are the $D^2$-factors in piece $7 \cong \mathbb{D}^2\times\mathbb{S}^1$.}
\end{figure}

\newpage

\begin{remark}
A quick check verifies that $Y^3$ is simply connected and has vanishing integral homology and hence is contractible. 
\end{remark}

Our next step is to build a (generically) $2 \rightarrow 1$ immersion of some closed 3-manifold $M$ onto $Y^3$. We will construct the source 3-manifold $M$ piece by piece and at the end verify it is indeed a 3-sphere. It is not much extra work simultaneously describe the regular neighborhood $\mathcal{N}$ of $Y^3 \subset \mathbb{S}^3 \times [-2,2]$. What we will be verifying is that $\mathcal{N} \overset{\text{PL}}{\cong} \mathbb{D}^4$ and $\partial \mathcal{N} \overset{\text{PL}}{\cong} \mathbb{S}^3$, and that for a natural mapping cylinder structure $f\colon \partial \mathcal{N} \rightarrow Y^3$ on $\mathcal{N}$, $f$ is a PL immersion.
\newpage

\subsection*{Assembling the pieces into a 3-sphere}

Kirby calculus notation for $1 \cup 2 \cup 6$ is:
\begin{figure}[h!]
\label{}
\centering
\includegraphics[scale=0.37]{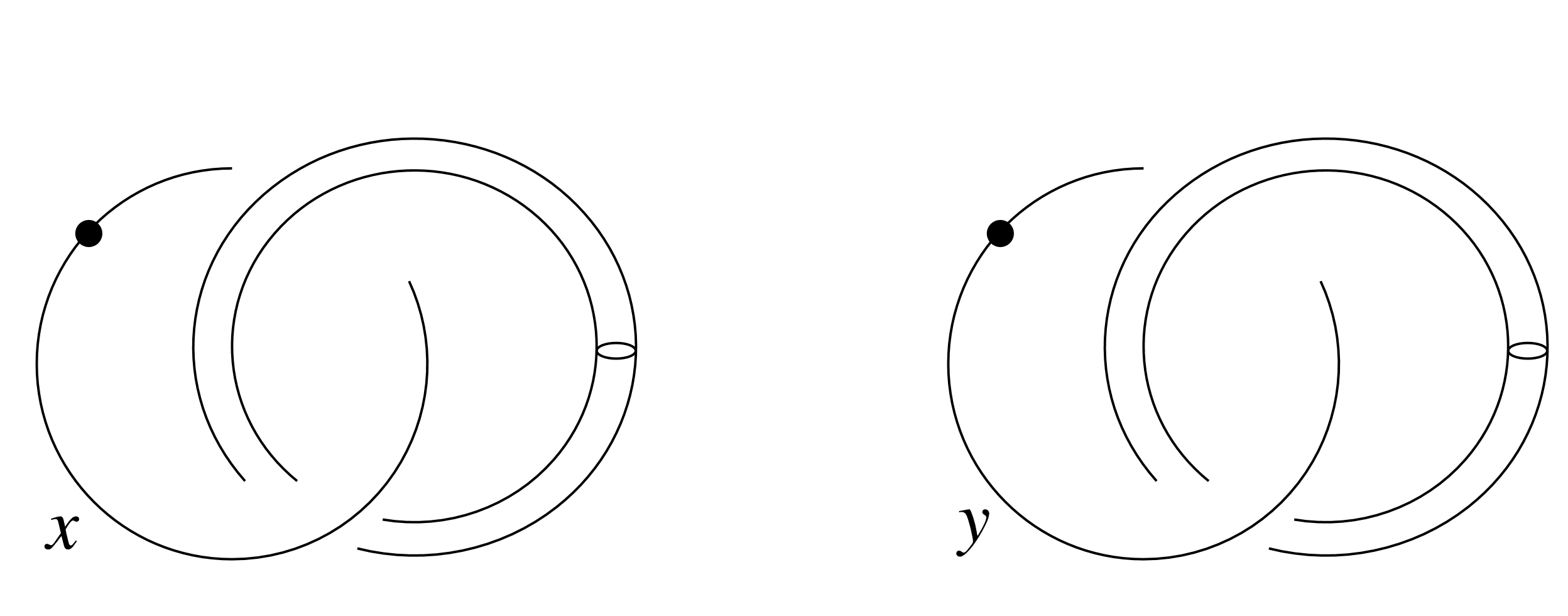}
\caption{}
\end{figure}

\bigskip

Now let us understand $4 \cup 3 \cup 6_- \cup 2 \cup 5 \cup 8_2$ (and its symmetric partner $5 \cup 3 \cup 6_+ \cup 1 \cup 4 \cup 7_2$). The subscripts mean use two copies of 8 and 7. Notice with this convention that every piece is now used twice corresponding to the (generic) $2 \rightarrow 1$ nature of $r_1$.

Start with $4 \cup 3 \cup 6_- \cup 2$. The $\text{tube}_4$ merely adds a collar, and $3 \cup 6_- \cup 2$ is simply a genus 2 Heegaard decomposition of a $3$-sphere $\mathbb{S}^3$ with the neighborhood of two trivial circles deleted from the exterior handlebody. At the level of Kirby diagrams we have:

\begin{figure}[h!]
\label{}
\centering
\includegraphics[scale=0.51]{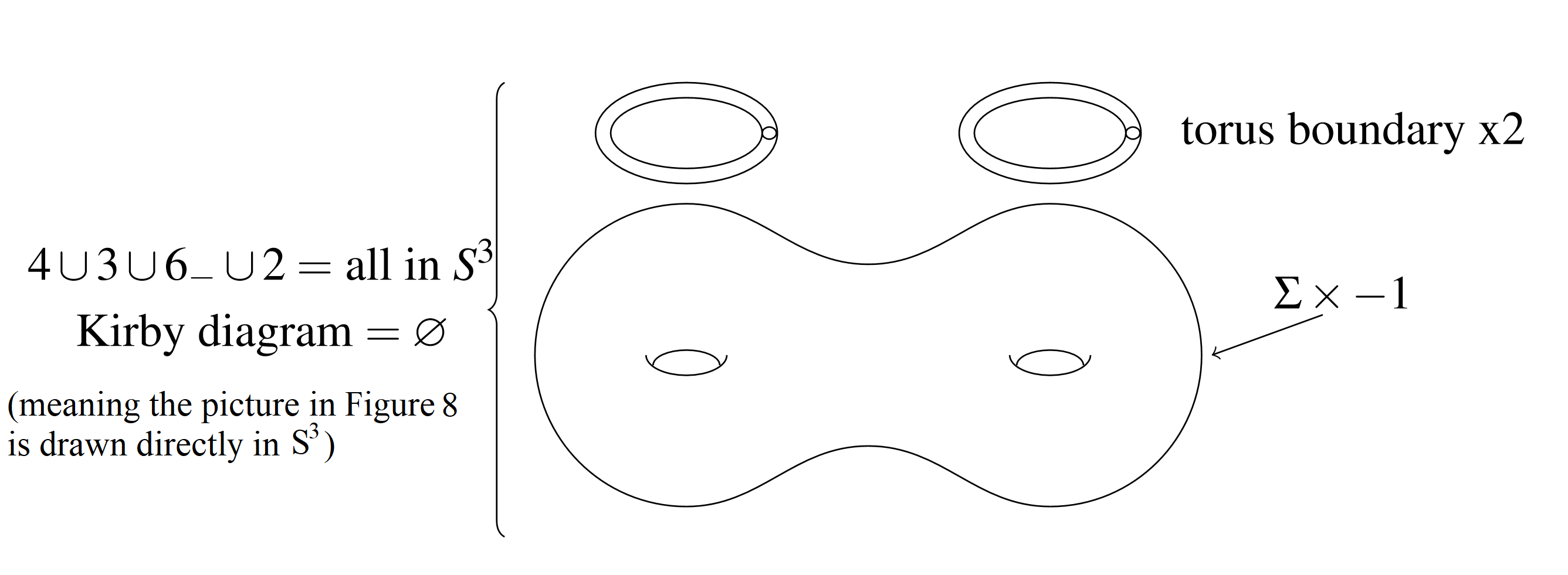}
\caption{}
\end{figure}

\bigskip

The $\text{tube}_5$ effects the attachment of a trivial ``round 1-handle" to $\mathbb{S}^3$ (a round 1-handle in this dimension is $(\mathbb{D}^1 \times \mathbb{D}^2, \partial \mathbb{D}^1 \times \mathbb{D}^2) \times \mathbb{S}^1$) and the two copies of 8 effects a round 2-handle attachment ($(\mathbb{D}^2 \times \mathbb{D}^1, \partial \mathbb{D}^2 \times \mathbb{D}^1) \times \mathbb{S}^1$). Looking back at Figure 6, observe that the regions labeled 5 and 8 constitute a (Morse canceling pair)$\times \mathbb{S}^1$. That is a circle's worth of canceling critical points of index 1 and 2 (detail in Figures 8 and 9 below). Now adding 5 yields:
\begin{figure}[h!]
\label{}
\centering
\includegraphics[scale=0.4]{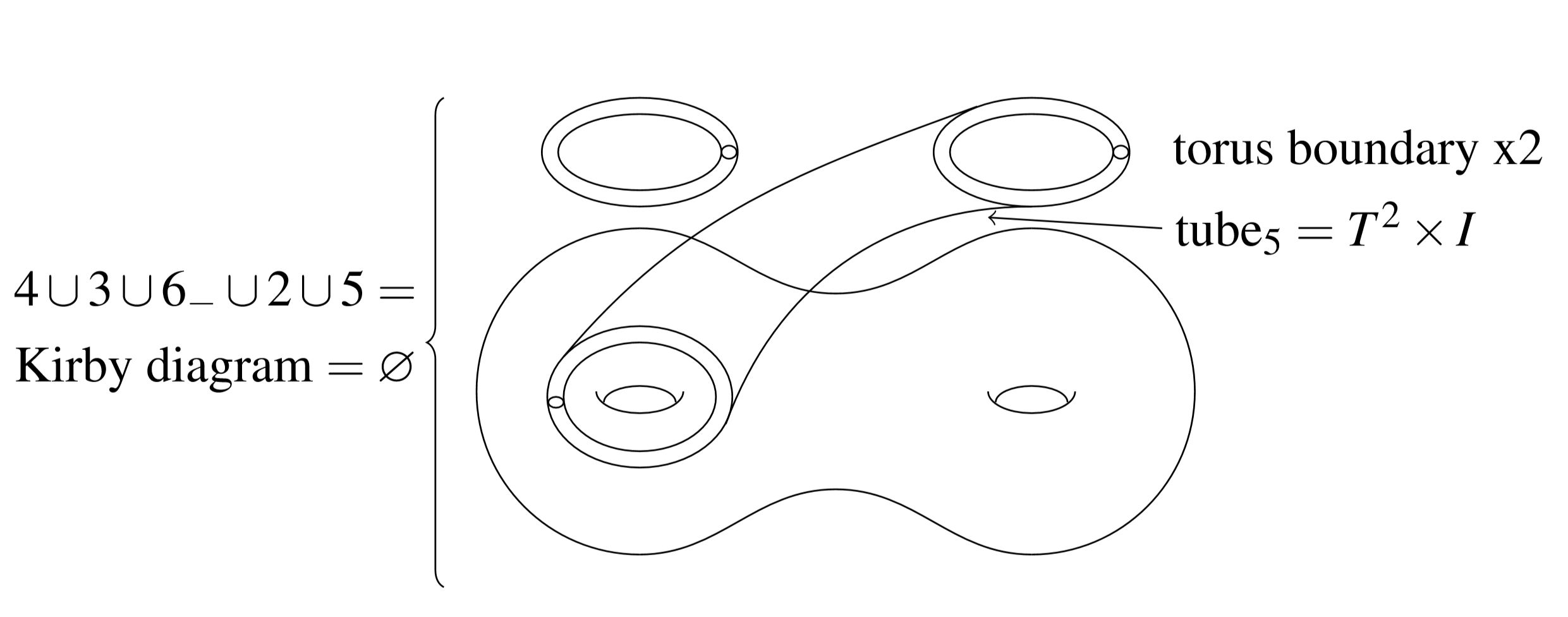}
\caption{}
\end{figure}

\begin{figure}[h!]
\label{}
\centering
\includegraphics[scale=0.37]{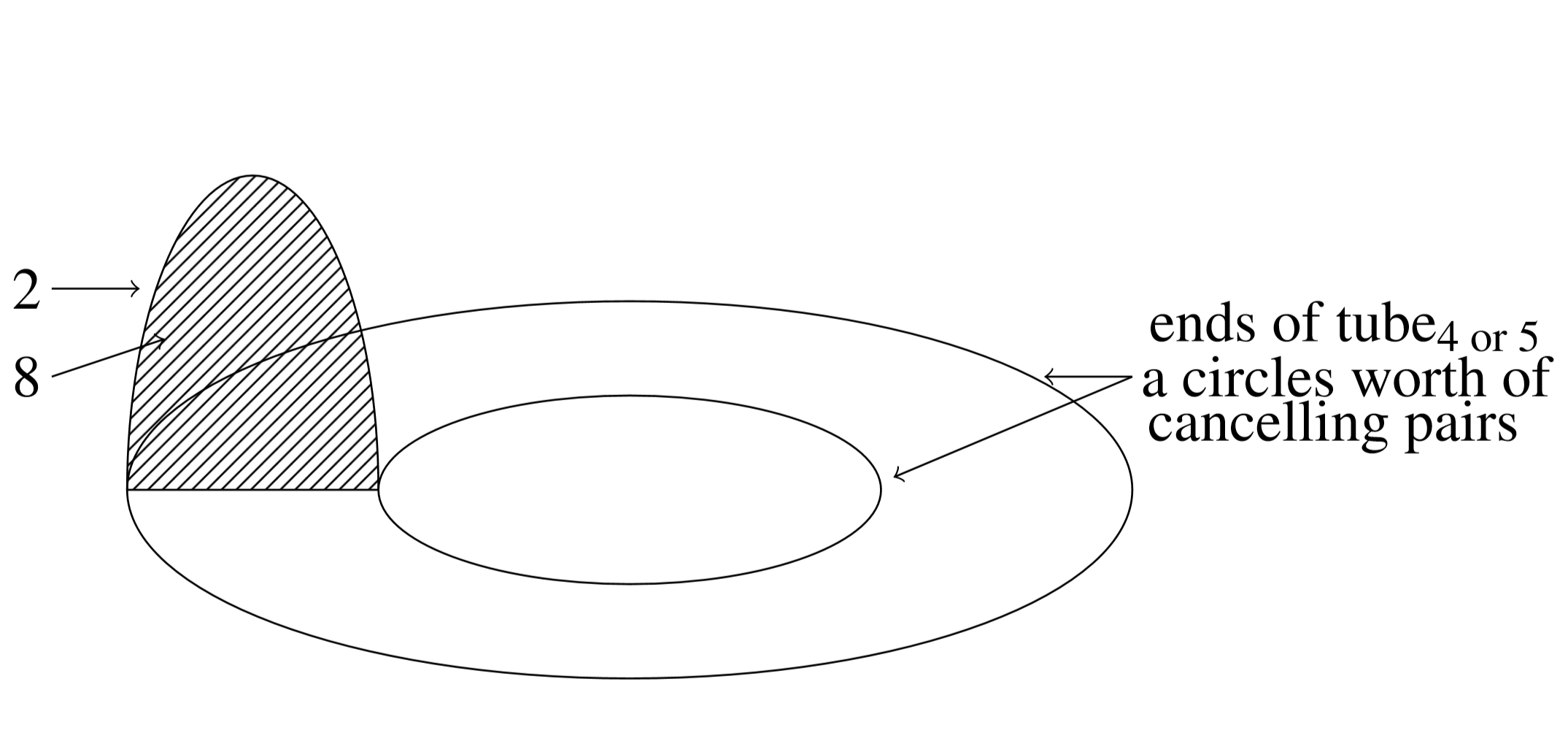}
\caption{}
\end{figure}

\newpage 

The upshot is that both $4 \cup 3 \cup 6_- \cup 2 \cup 5 \cup 8_2$ and its symmetric partner $5 \cup 3 \cup 6_+ \cup 1 \cup 4 \cup 7_2$ are 3-spheres minus the neighborhood of an unknotted simple closed curve (i.e. solid tori). Furthermore, the meridians of these two solid tori glue to the $x$ and $y$ longitudes of $1 \cup 2 \cup 6$, surgering the $3$-manifold represented by the central rectangle in Figure 4,
\[ \partial ((\text{genus 2 handlebody})\times I)\cong (\mathbb{S}^1 \times \mathbb{S}^2) \# (\mathbb{S}^1 \times \mathbb{S}^2)\] 
to $\mathbb{S}^3$. This completes the proof that the 18 pieces in fact glue up to $\mathbb{S}^3$.

\subsection*{PL regular neighborhood of $Y^3$} We may now proceed to draw 4-dimensional Kirby diagrams for the PL regular neighborhood of the image $Y^3$.

\[\mathcal{N}(1 \cup 2 \cup 6) = (\mathbb{S}^2 \times \mathbb{D}^2) \natural (\mathbb{S}^1 \times \mathbb{D}^3) \natural (\mathbb{S}^1 \times \mathbb{D}^3),\] or in Kirby calculus:
\begin{figure}[h!]
\label{}
\centering
\includegraphics[scale=0.27]{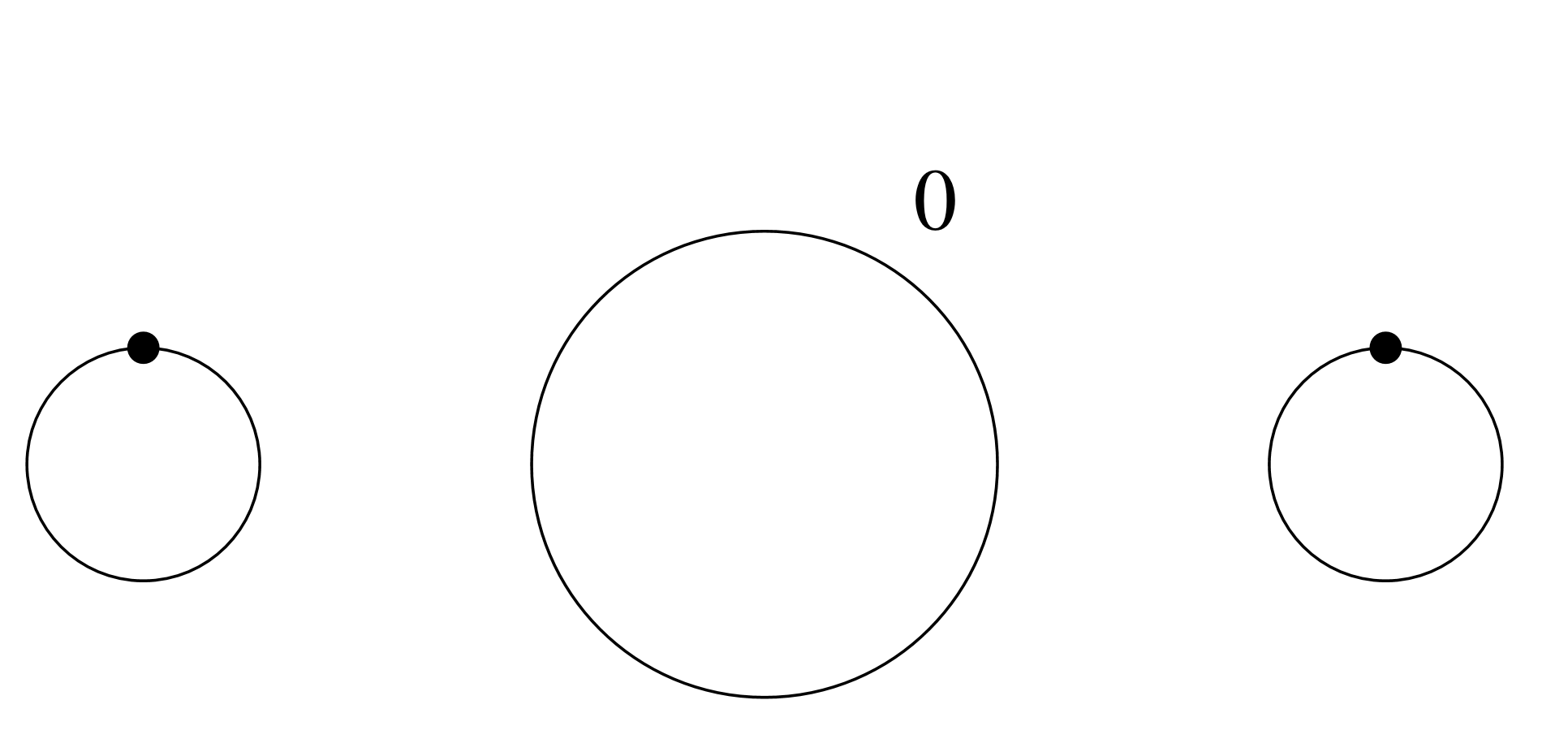}
\caption{}
\end{figure}
\newpage

We now proceed to add the remaining thickened pieces and draw the corresponding Kirby diagrams---omitting the 3-handles.
Due to a theorem of Trace \cite{Trace} based on work of Laudenbach and Poenaru \cite{LP} (see remark 4.4.1 of \cite{Gompf} for an exposition) it is not necessary to record the 3-handle attaching 2-spheres of a 4-dimensional handlebody with connected boundary in order to define the diffeomorphism type of the 4-manifold. Thus tracking only 1 and 2-handles, we get Figure 12.

\begin{figure}[h!]
\label{}
\centering
\includegraphics[scale=1.01]{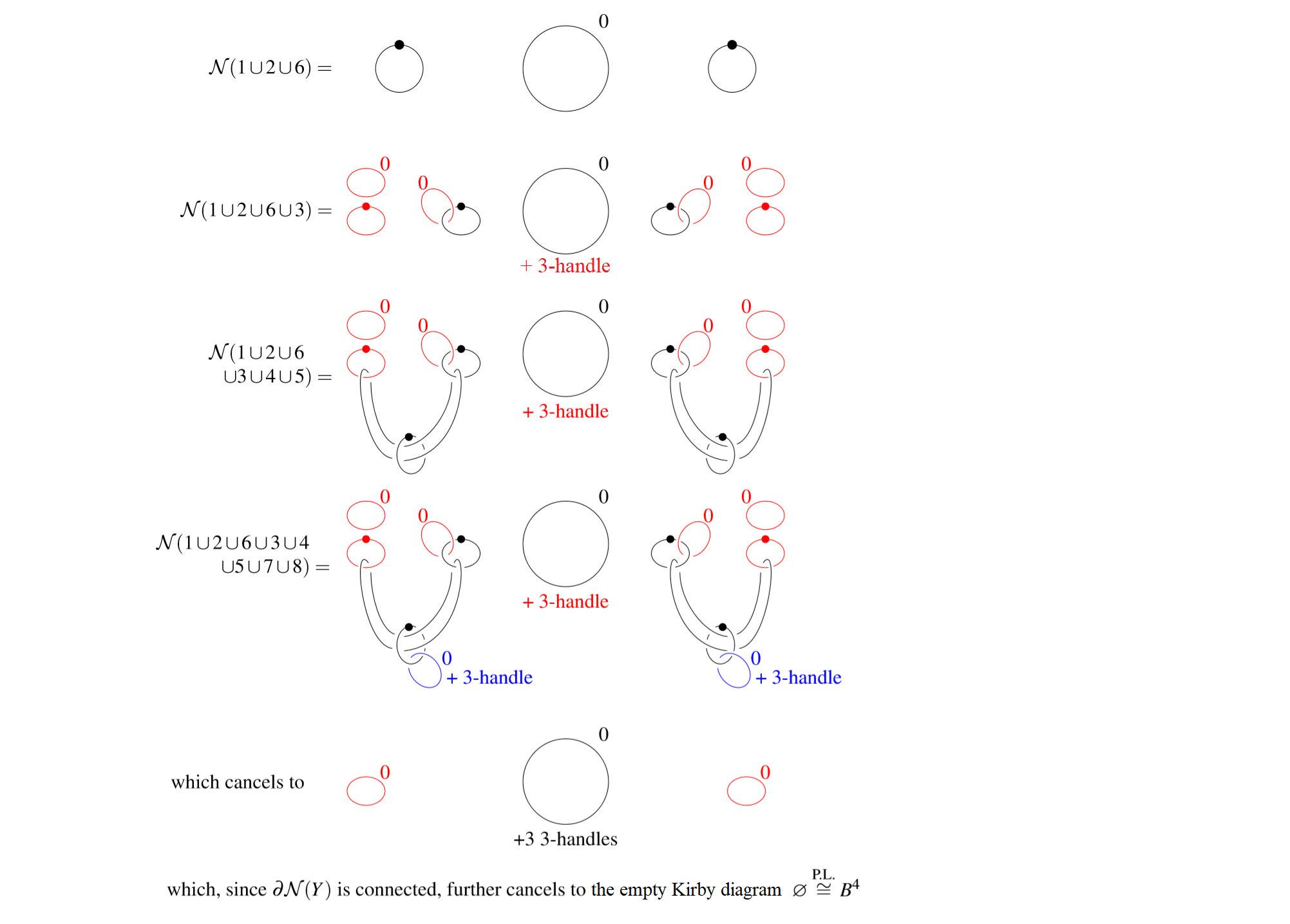}
\caption{}
\end{figure}

This completes the construction of a 3-dimensional Bing house $Y^3 \subset \mathbb{D}^4$. 

The conclusion that the natural PL mapping cylinder structure on $\mathcal{N}(Y^3)$ defines a PL-immersion from $\mathbb{S}^3 = \partial \mathcal{N}(Y^3) \rightarrow Y^3$ follows from inspection. There are three local (PL) models for a point $y \in Y^3$. It can be 
\begin{itemize}
\item[(1)] in the interior of a sheet,
\item[(2)] at a $(\text{triple point } Y) \times \R^2$, or
\item[(3)] (the cone on the 1-skeleton of tetrahedron)$\times \mathbb{R}$.
\end{itemize}
Unlike at the cone points of the suspension of the Bing house, for each of these models a neighborhood of $y$ is covered by a finite number of $\mathbb{R}^{n-1}$-sheets (sheets of the immersion of $\mathbb{S}^{n-1}$). 
The approach via mapping cylinder lines to any such $y$ from any complementary region is locally (PL) one-to-one in a neighborhood of $y$. That is, $Y^3$ is covered by PL immersed $\R^3$-charts. This completes the proof of Theorem \ref{main theorem} for $n=3$ and $4$.

\section{To higher dimensions}\label{sec: inductive argument}

Now we come to a fork in the road, and we will take both paths. One is by relabeling the pieces in Figure 3. The other one is an inductive construction.

\subsection{Relabeling Figure 3}
To produce Bing houses $Y^{n-1}$ in $\mathbb{D}^n$ for $n \geq 5$, we can explain a straightforward relabeling of Figure 3 appropriate for the higher dimension. This is Figure 13. 
\begin{figure}[h!]
\label{}
\centering
\includegraphics[scale=0.5]{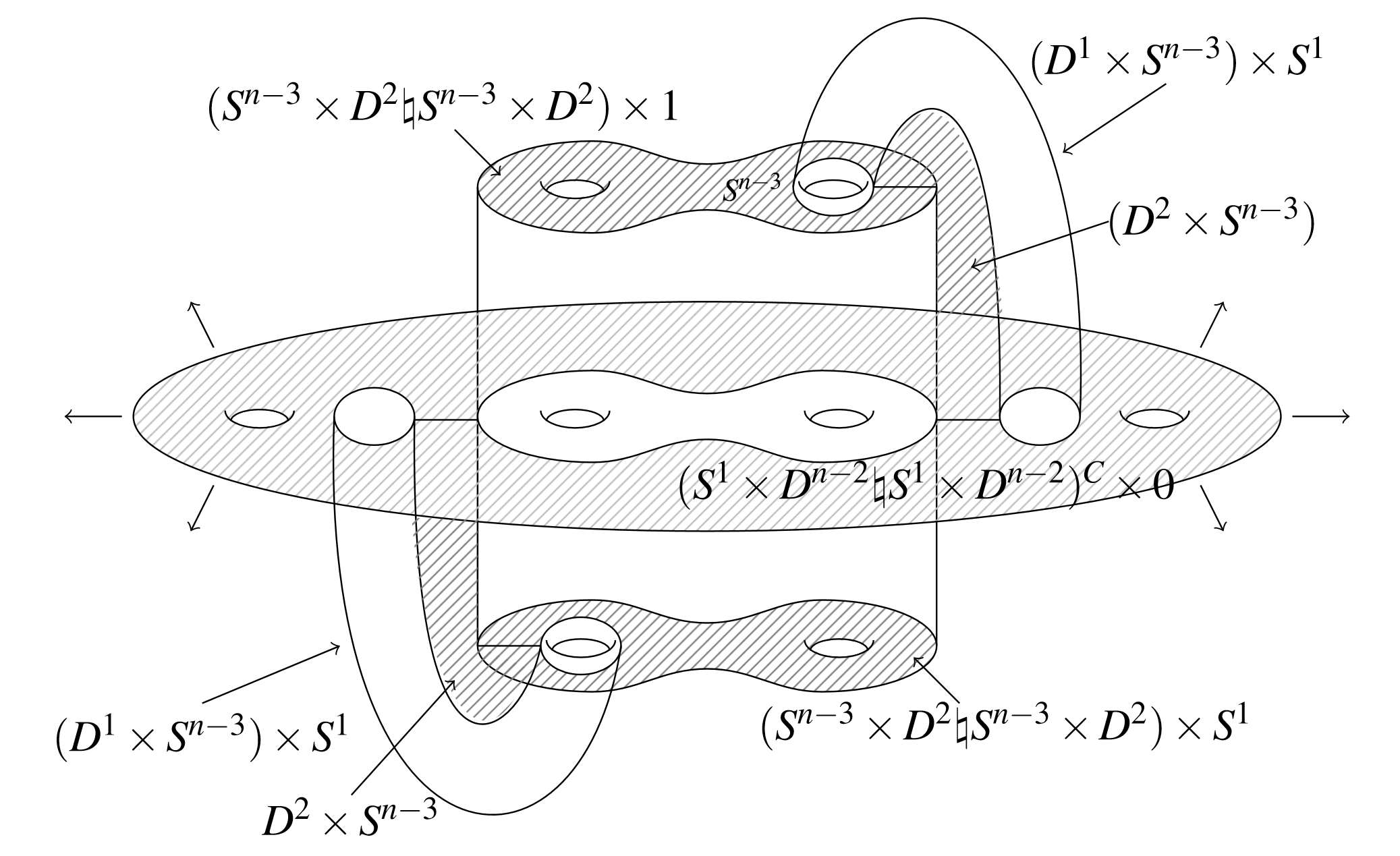}
\caption{}
\end{figure}
The proof is parallel to what we did for $n=4$ (and will not be repeated) up to the Kirby calculus discussion summarized by Figure 12. Doing the handlebody theory carefully in higher dimension would require establishing the notations for a generalized Kirby calculus and drawing \emph{all} handles since we no longer have the convenience of Trace's Theorem. Fortunately all this is unnecessary, since the PL Poincare conjecture can be used to conclude that for $n \geq 5$, $\mathcal{N}(Y^{n-1}) \overset{\text{PL}}{\cong} \mathbb{D}^n$. This is because the lower dimensional argument extends far enough to imply $Y^{n-1} \cong \{\ast\}$, and $\partial \mathcal{N}(Y^{n-1}) \overset{\text{PL}}{\cong} \mathbb{S}^{n-1}$. So in pursuing this first fork we are content to leave the reader with Figure 13.

\subsection{An inductive construction}
The second branch of our road to high dimensions is to give an inductive procedure for producing a Bing house $Y^{n+2}$ given a Bing house $Y^n$ as starting point. Because the induction ``skips over" a dimension we need both the classical $X = Y^2$ and the just constructed $Y^3$ as starting points. We explain the induction in some detail.

Let us define a \emph{rank-2 Bing hut} as something a bit weaker than a Bing house. It will be PL complex $Z^{n-1} \subset \mathbb{D}^n$ with a PL regular neighborhood, $\mathcal{N}(Z)$, PL homeomorphic to a PL $n$-ball $\mathbb{D}^n$. It is convenient to let $Z^{n-1} \cap \partial \mathbb{D}^n = \mathbb{S}^{n-2}$, the equator between hemispheres $N$ and $S$ of $\partial \mathbb{D}^n = \mathbb{S}^{n-1}$. Furthermore, $\mathcal{N}(Z)$ must possess a mapping cylinder structure induced by a map $g: \partial \mathcal{N}(Z) \cong \mathbb{S}^{n-1} \rightarrow Z$ which is required to be a PL immersion on the interior $\dot{A}$ of a compact co-dimension zero PL submanifold $A$ of $\partial \mathcal{N}(Z)$. The closure $\overline{\partial \mathcal{N} \setminus A}$ is further required to consist of finitely many PL copies of $\mathbb{D}^2 \times \mathbb{S}^{n-3}$, and $g$ restricted to each copy, is the projection (modulo the selection of suitable PL coordinates in domain and range) $D^2 \times \mathbb{S}^{n-3} \xrightarrow{\pi_1} D^2$, $D^2 \subset Z$ (see Figure 14). In other words $\mathcal{N}(Z)$ contains distinguished 2-handles and on these the mapping cylinder structure is the rank 2 projection.

\begin{figure}[h!]
\label{local-model}
\centering
\includegraphics[scale=0.42]{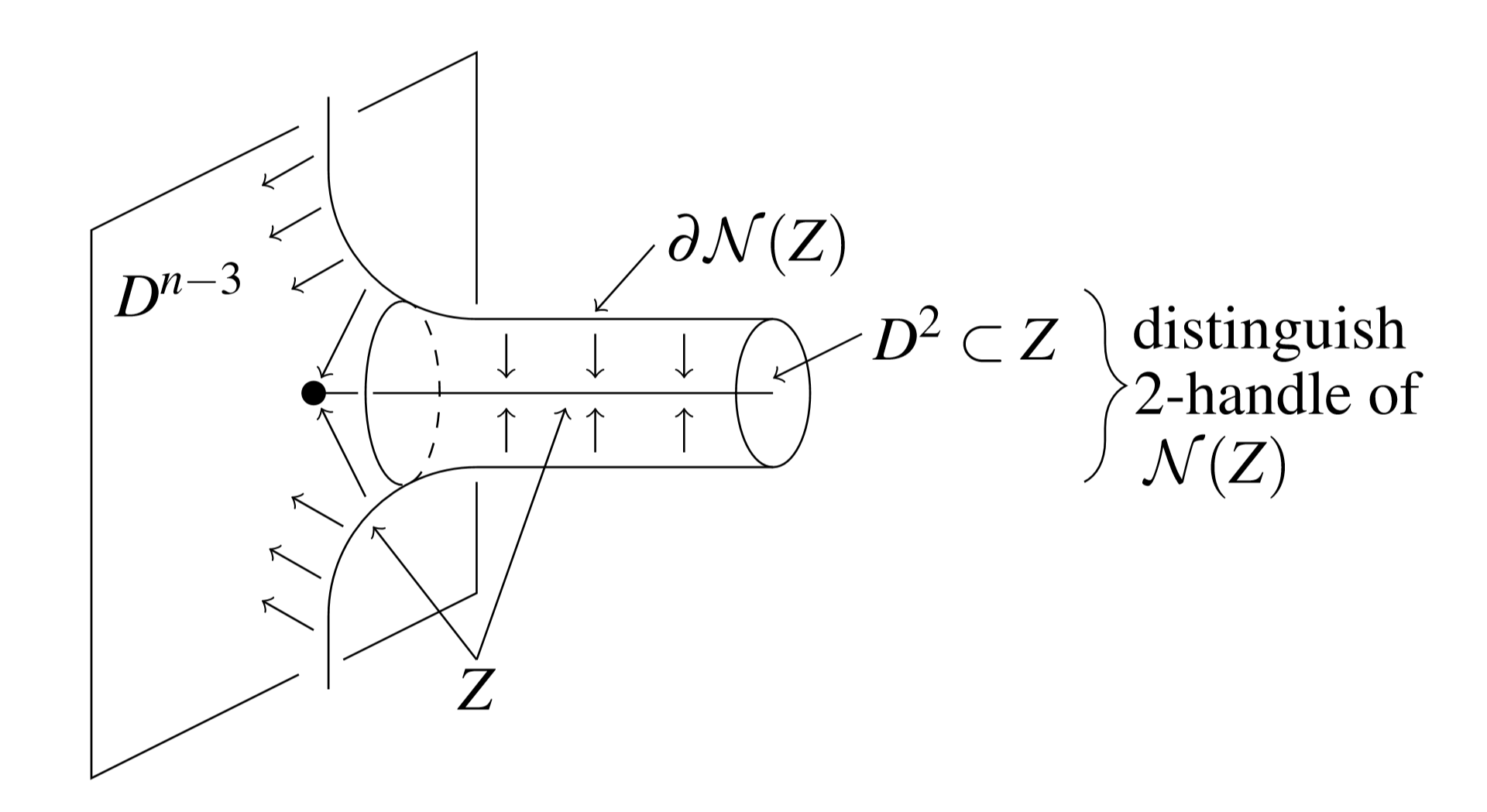}
\caption{Local model for $g$ near 2-handles $(\mathbb{D}^2\times \mathbb{D}^{n-2}, \partial \mathbb{D}^2\times \mathbb{D}^{n-2})$.}
\end{figure}

The (second) proof of Theorem \ref{main theorem} will be complete by combining  Lemmas \ref{rank-2-bh} and \ref{making-bh} below.

\begin{lemma}\label{rank-2-bh}
For all $n \geq 3$ there is a rank-2 Bing hut $Z^{n-1} \subset \mathbb{D}^n$.
\end{lemma}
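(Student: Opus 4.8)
\textbf{Proof proposal for Lemma \ref{rank-2-bh}.}

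The plan is to build $Z^{n-1}$ by essentially the same recipe that produced $Y^3$ in Section \ref{sec: 3d Bing house}, but only carrying the construction as far as the rank-2 data demands, rather than all the way to a genuine Bing house. First I would set up the ambient picture: place $Z^{n-1}$ in $\mathbb{D}^n$ so that $Z^{n-1} \cap \partial \mathbb{D}^n$ is the equatorial $\mathbb{S}^{n-2}$, and arrange the ``vertical wall'' of Figure 3 to be a product $\Sigma^{n-2} \times [-1,1]$ where $\Sigma^{n-2} \subset \mathbb{S}^{n-1}$ is a Heegaard-type splitting surface (genus 2, or its high-dimensional analogue obtained by the relabeling of Figure 13). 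The pieces (1)--($6_\pm$) are then defined exactly as in the $3$-dimensional case --- the two interior handlebodies with a basis-curve neighborhood deleted, the exterior handlebody with two trivial curve-neighborhoods deleted, the ``tube'' pieces $(\mathbb{D}^2 \times \partial \mathbb{D}^2) \times \mathbb{S}^{n-3}$ (now with an $\mathbb{S}^{n-3}$ factor rather than $\mathbb{S}^1$), and the two halves $\Sigma \times [-1,0]$, $\Sigma \times [0,1]$ of the wall.

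The key simplification is this: in the full Bing-house construction of $Y^3$, pieces 7 and 8 (the $\mathbb{D}^2 \times \mathbb{S}^1$'s rendered as vertical disks) were added to surger $\partial\big((\text{genus 2 handlebody}) \times I\big) \cong (\mathbb{S}^1 \times \mathbb{S}^2) \# (\mathbb{S}^1 \times \mathbb{S}^2)$ back to $\mathbb{S}^3$, and it is precisely these pieces on which a would-be mapping-cylinder immersion behaves like the rank-2 projection $\mathbb{D}^2 \times \mathbb{S}^{n-3} \to \mathbb{D}^2$ that Section \ref{sec: inductive argument} warns about. Rather than fight this, the rank-2 Bing hut is defined to \emph{tolerate} it: I would take $Z^{n-1}$ to be the union of pieces (1)--($6_\pm$) together with the analogues of pieces 7 and 8, and let $A \subset \partial\mathcal{N}(Z)$ be the complement of (a regular neighborhood of) the part of $\partial\mathcal{N}$ lying over those distinguished $\mathbb{D}^2$'s. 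Then $g$ is a PL immersion on $\dot A$ by the same three-local-models inspection used for $Y^3$ (interior of a sheet; triple-point-$Y \times \mathbb{R}^{n-2}$; cone-on-1-skeleton-of-tetrahedron $\times \mathbb{R}^{n-3}$), and on $\overline{\partial\mathcal{N}\setminus A}$, which is a disjoint union of copies of $\mathbb{D}^2 \times \mathbb{S}^{n-3}$, it is by construction the projection to $\mathbb{D}^2$. What remains is to verify the two topological assertions: $\mathcal{N}(Z) \overset{\mathrm{PL}}{\cong} \mathbb{D}^n$ and hence $\partial\mathcal{N}(Z) \overset{\mathrm{PL}}{\cong} \mathbb{S}^{n-1}$. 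For $n = 3, 4$ this is the Kirby-calculus computation already carried out (Figures 7--12), read off verbatim; for $n \geq 5$ one runs the same handle-cancellation argument far enough to see $Z^{n-1}$ is contractible (indeed collapsible after the relabeling), then invokes the PL Poincar\'e conjecture exactly as in the first fork of Section \ref{sec: inductive argument} to upgrade ``contractible regular neighborhood with $\mathbb{S}^{n-1}$ boundary'' to ``PL $n$-ball.''

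The main obstacle I expect is bookkeeping the product factors consistently: when the circle factor $\mathbb{S}^1$ in pieces 4, 5, 7, 8 is replaced by $\mathbb{S}^{n-3}$, one must check that the ``round handle'' attachments ($(\mathbb{D}^1\times\mathbb{D}^2)\times\mathbb{S}^{n-3}$ and $(\mathbb{D}^2\times\mathbb{D}^1)\times\mathbb{S}^{n-3}$) still form a cancelling pair over all of $\mathbb{S}^{n-3}$, i.e. that the Morse-cancellation in Figure 6 is genuinely fiberwise and survives the dimension shift, and that the meridians of the resulting solid-torus-like pieces $\mathbb{S}^{n-3}\times(\text{2-disk complement})$ still glue to the relevant longitudes so as to surger $\partial\big((\text{handlebody})\times I\big)$ to a sphere. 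Since the rank-2 definition deliberately does not require $Z^{n-1}$ itself to collapse to a point --- only that $\mathcal{N}(Z)$ be a ball --- I do not need to resolve the $\mathbb{D}^2\times I \to \mathbb{D}^2$ projection singularity here; that is deferred to Lemma \ref{making-bh}, where the inductive step presumably trades a rank-2 hut in dimension $n$ for a genuine Bing house in dimension $n+2$ by filling in those distinguished 2-handles using a lower-dimensional Bing house crossed with $\mathbb{D}^2$ (or $\mathbb{S}^{n-3}$).
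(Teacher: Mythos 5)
Your proposal takes a genuinely different route from the paper, and it rests on a premise that the paper contradicts. You assume that in the $Y^3$ construction of Section \ref{sec: 3d Bing house} (or its higher-dimensional relabeling) the mapping-cylinder map degenerates to the rank-2 projection $\mathbb{D}^2\times\mathbb{S}^{n-3}\to\mathbb{D}^2$ over pieces 7 and 8, and you plan to simply tolerate that degeneracy. But the paper explicitly verifies the opposite: the map $f\colon\partial\mathcal{N}(Y^3)\to Y^3$ is a full-rank PL immersion everywhere, including over pieces 7 and 8 (the three-local-models inspection at the end of Section \ref{sec: 3d Bing house}). In those pieces, $\mathbb{D}^2\times\mathbb{S}^1$ sits in $Y^3$ with the $\mathbb{D}^2$ a local sheet direction --- there is no $\pi_1$-projection onto a $\mathbb{D}^2$-factor anywhere. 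The same holds after the relabeling of Section 3.1. So there is no ``stopping point'' in that construction where a rank-2 hut falls out; the $Y^3$-style recipe, when it works, gives a genuine Bing house directly, and your relaxation has nothing to relax.

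The paper's actual proof of Lemma \ref{rank-2-bh} is not a truncation of the $Y^3$ recipe; it is a direct, much more elementary construction (Figure 15) that generalizes the \emph{classical two-room house} rather than the elaborate nine-piece $Y^3$. The ingredients are a fold map $f_1\colon\mathbb{S}^{n-1}\to\mathbb{D}^{n-1}$ giving two sheets, a degree-one map $f_2\colon\mathbb{S}^{n-1}\to(\mathbb{S}^{n-2}\times\mathbb{S}^1)\cup D^2$ encoding a tube, a ``ruff'' $\mathbb{S}^{n-2}\times\mathbb{S}^1$ acting as the wall, and a 2-disk $D'^{2}$ whose 2-handle cancels the $\pi_1$ generated by the tube and surgers the outer boundary to $\mathbb{D}^{n-1}$. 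The two distinguished 2-handles where $g$ drops to rank 2 are precisely the obvious 2-disks in this picture, not analogues of pieces 7 and 8, and the boundary is tracked explicitly at every step. That explicitness is the entire point of the inductive fork: Lemma \ref{making-bh} then builds the collapse $\mathbb{D}^n\searrow Y^{n-1}$ by hand, so this branch avoids the PL Poincar\'e conjecture altogether. Your proposal re-imports PL Poincar\'e for $n\geq 5$, which makes the rank-2-hut detour no more elementary than the relabeling fork of Section 3.1 and defeats the purpose Lemma \ref{rank-2-bh} was designed to serve.
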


\begin{lemma}\label{making-bh}
If $Y^{n-3} \subset \mathbb{D}^{n-2}$ is a $(n-3)$-dimensional Bing house and $Z^{n-1} \subset \mathbb{D}^n$ an $(n-1)$-dimensional rank-2 Bing hut, then replacing the distinguished disks $D^2_s \subset Z^n$ with copies of  $\mathbb{D}^2 \times Y^{n-3}$, we obtain an $(n-1)$-dimensional Bing house $Y^{n-1} = (Z^{n-1} \setminus D_s^2) \cup (D_s^2 \times Y^{n-3})$. One level of pairs $(\mathcal{N}(Y^{n-1}), Y^{n-1}) = (\mathcal{N}(Z^{n-1}) \setminus \text{2-handles}, (Z^{n-1} \setminus D_s^2) \cup D_s^2 \times Y^{n-3})$.

The subscript ``$s$" labels the 2-handles over which the original projection is rank-two, in our case there are precisely two such 2-handles so $s\in \{1,2\}$.
\end{lemma}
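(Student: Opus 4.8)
The plan is to verify the three defining properties of a Bing house for $Y^{n-1} = (Z^{n-1}\setminus D_s^2)\cup(D_s^2\times Y^{n-3})$: that it is a PL complex embedded in $\mathbb{D}^n$, that its PL regular neighborhood is a PL $n$-ball, and that the induced mapping cylinder structure $f\colon\partial\mathcal{N}(Y^{n-1})\to Y^{n-1}$ is a PL immersion. The embedding is essentially bookkeeping: the distinguished disks $D^2_s$ sit in $\mathrm{int}(\mathbb{D}^n)$ (pushed off the equator), each has a trivialized normal $\mathbb{D}^{n-2}$-bundle inside $\mathbb{D}^n$ coming from the 2-handle $\mathbb{D}^2\times\mathbb{D}^{n-2}$ of $\mathcal{N}(Z)$, and we use $Y^{n-3}\subset\mathbb{D}^{n-2}$ to populate each normal fiber, replacing $D^2_s$ by $D^2_s\times Y^{n-3}\subset D^2_s\times\mathbb{D}^{n-2}$. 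So $Y^{n-1}$ is a genuine PL subcomplex of $\mathbb{D}^n$.

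The heart of the argument is the regular-neighborhood computation, which I would phrase as the asserted ``one level of pairs'' identity. Outside the 2-handles, $\mathcal{N}(Z)$ restricts to a neighborhood of $Z\setminus D_s^2$, and $f$ is already a PL immersion there by the rank-2 Bing hut hypothesis; this part is untouched. Over each distinguished 2-handle $\mathbb{D}^2\times\mathbb{D}^{n-2}$ of $\mathcal{N}(Z)$, I would use the fact that $\mathcal{N}_{\mathbb{D}^{n-2}}(Y^{n-3})\overset{\text{PL}}{\cong}\mathbb{D}^{n-2}$ (the Bing house property of $Y^{n-3}$) to identify $\mathcal{N}_{\mathbb{D}^n}(D^2_s\times Y^{n-3}) = \mathbb{D}^2\times\mathcal{N}_{\mathbb{D}^{n-2}}(Y^{n-3})\overset{\text{PL}}{\cong}\mathbb{D}^2\times\mathbb{D}^{n-2}$, i.e.\ thickening $D^2_s\times Y^{n-3}$ gives back exactly the 2-handle, with matching boundary behavior on $\partial\mathbb{D}^2\times\mathbb{D}^{n-2}$. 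Since replacing each 2-handle's core-disk-cross-point ``spine'' by $D^2_s\times Y^{n-3}$ does not change the total space $\mathcal{N}(Z)$ (only the spine it deformation retracts to), we get $\mathcal{N}(Y^{n-1}) = \mathcal{N}(Z^{n-1})\overset{\text{PL}}{\cong}\mathbb{D}^n$ and $\partial\mathcal{N}(Y^{n-1}) = \partial\mathcal{N}(Z^{n-1})\overset{\text{PL}}{\cong}\mathbb{S}^{n-1}$ for free. I would also note the collapse $\mathbb{D}^n\searrow Z^{n-1}\searrow (Z^{n-1}\setminus D^2_s)\cup(D^2_s\times Y^{n-3})$, the last step coming from $\mathbb{D}^{n-2}\searrow Y^{n-3}$ crossed with $D^2_s$, fibered compatibly with the previous structure.

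Finally, the mapping-cylinder/immersion statement is where the rank-2 projection is exactly calibrated to be fixed. Over $Z\setminus D^2_s$ the map $f=g$ is already a PL immersion. Over a distinguished 2-handle, $g$ restricted to $\overline{\partial\mathcal{N}(Z)\setminus A}\cong\mathbb{D}^2\times\mathbb{S}^{n-3}$ is the projection $\mathbb{D}^2\times\mathbb{S}^{n-3}\xrightarrow{\pi_1}D^2_s$ — precisely the map whose ``defect from being an immersion'' is the $\mathbb{S}^{n-3}$ fiber. Replacing $D^2_s$ by $D^2_s\times Y^{n-3}$ and using the Bing house mapping-cylinder map $h\colon \mathbb{S}^{n-3}=\partial\mathcal{N}(Y^{n-3})\to Y^{n-3}$, the new $f$ over this region is $\mathrm{id}_{\mathbb{D}^2}\times h\colon \mathbb{D}^2\times\mathbb{S}^{n-3}\to\mathbb{D}^2\times Y^{n-3}$, which is a PL immersion because $h$ is. One checks this is compatible along $\partial\mathbb{D}^2\times(\text{everything})$ with $g$ on the complement, so the two mapping cylinder structures glue to a global PL mapping cylinder structure on $\mathcal{N}(Y^{n-1})$ whose associated $f$ is a PL immersion everywhere. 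The main obstacle, and the step deserving the most care, is making the gluing along the 2-handle attaching regions $\partial\mathbb{D}^2\times\mathbb{D}^{n-2}$ genuinely PL-compatible — i.e.\ checking that the PL identifications $\mathcal{N}_{\mathbb{D}^{n-2}}(Y^{n-3})\cong\mathbb{D}^{n-2}$ and the fiber map $h$ can be chosen to agree with the product structure that the rank-2 hut already imposes near $\partial A$, so that no new singularities are introduced at the seam.
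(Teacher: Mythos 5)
Your proposal takes essentially the same route as the paper: outside the distinguished $2$-handles nothing changes, and over each $2$-handle you insert the lower-dimensional mapping-cylinder structure $h\colon \mathbb{S}^{n-3}=\partial\mathcal{N}(Y^{n-3})\to Y^{n-3}$ fiber-wise over the rank-$2$ projection $\mathbb{D}^2\times\mathbb{S}^{n-3}\xrightarrow{\pi_1}D^2_s$, which is exactly the paper's Figure~17 substitution; the observation that $\mathcal{N}_{\mathbb{D}^{n-2}}(Y^{n-3})\cong\mathbb{D}^{n-2}$ restores the $2$-handle, hence $\mathcal{N}(Y^{n-1})\cong\mathcal{N}(Z^{n-1})\cong\mathbb{D}^n$, is also the intended point.

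One small correction: the chain of collapses $\mathbb{D}^n\searrow Z^{n-1}\searrow (Z^{n-1}\setminus D^2_s)\cup(D^2_s\times Y^{n-3})$ cannot be right as written, because $Y^{n-1}$ is not a subcomplex of $Z^{n-1}$ (the piece $D^2_s\times Y^{n-3}$ lives in the $2$-handle $D^2_s\times\mathbb{D}^{n-2}$, not in $Z^{n-1}$ itself), and a collapse only moves downward. The collapse you actually want factors as $\mathbb{D}^n=\mathcal{N}(Z^{n-1})\searrow (Z^{n-1}\setminus D^2_s)\cup(\text{distinguished 2-handles})\searrow (Z^{n-1}\setminus D^2_s)\cup(D^2_s\times Y^{n-3})$, the last step being the product of $\mathbb{D}^{n-2}\searrow Y^{n-3}$ with $D^2_s$. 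With that repair, the argument matches the paper's and, as the paper notes, has the virtue of producing the collapse $\mathbb{D}^n\searrow Y^{n-1}$ explicitly rather than appealing to the PL Poincar\'e conjecture.
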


\begin{proof}[Proof sketch of Lemma \ref{rank-2-bh}]
	$Z^{n-1}$ is built in a few steps in Figure 15.
	
\begin{figure}[h!]
\label{building-bh}
\centering
\includegraphics[scale=0.75]{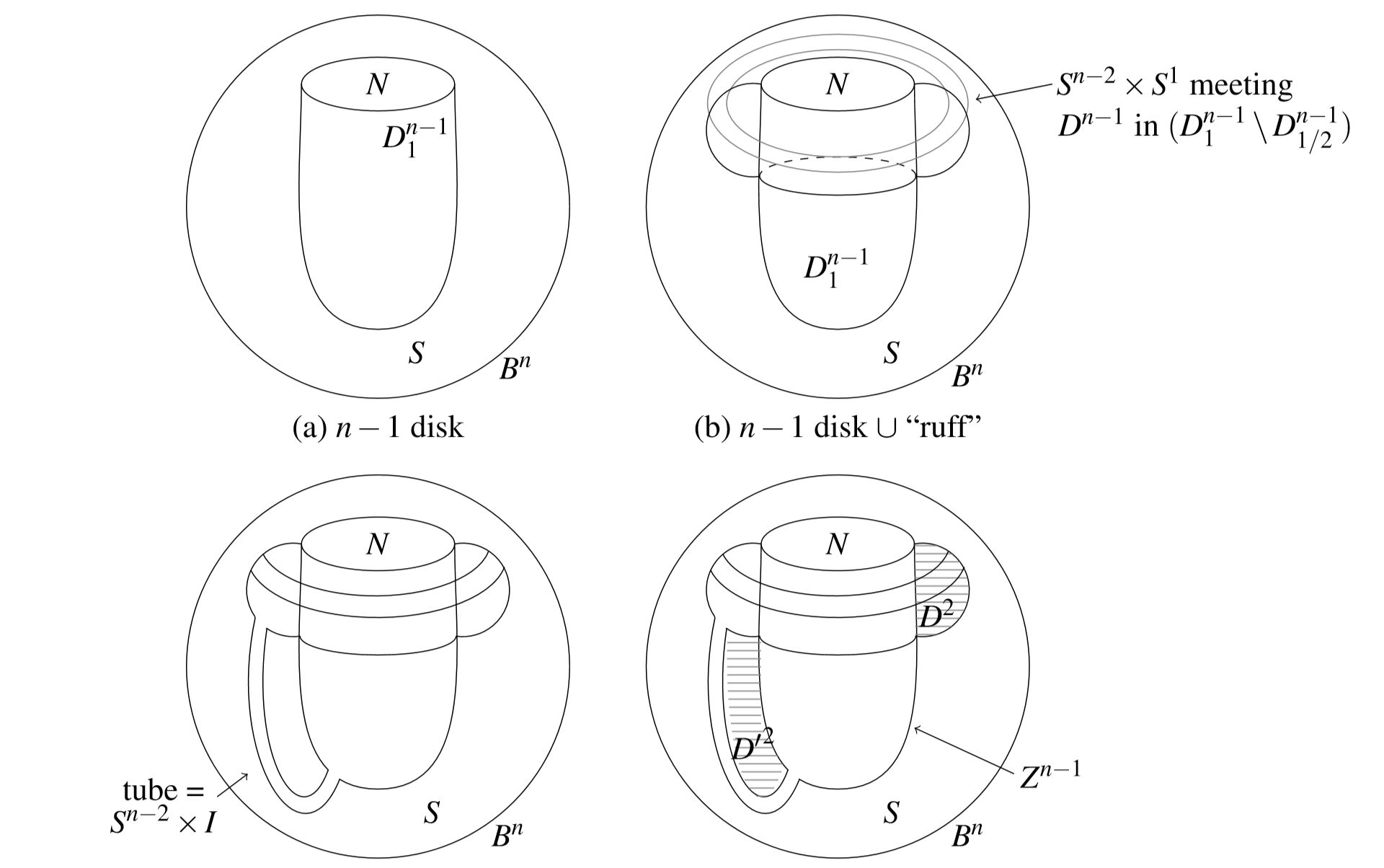}
\caption{}
\end{figure}

One may now check that Figure 15(d), in fact, is a rank-2 Bing hut. To aid in this checking, note two maps: the fold map $f_1: \mathbb{S}^{n-1} \ra \mathbb{D}^{n-1}$ takes the hemispheres $N, S \subset \mathbb{S}^{n-1}$ each 1-1 onto $\mathbb{D}^{n-1}$, and the natural degree one map $f_2: \mathbb{S}^{n-1} \rightarrow (\mathbb{S}^{n-2} \times \mathbb{S}^1) \cup D^2$ where $D^2$ attaches to $\ast \times \mathbb{S}^1$ degree one.

The north polar region $N$ of $\mathbb{S}^{n-1} = \partial B^n$ maps to the ``inside" of $Z^{n-1}$ essentially by $f_1\vert_N \#_{\text{tube}} f_2$. The map to the ``outside" of $Z^{n+1}$ is built by rounding $f_1\vert_S$ along the ``ruff" $S^{n-2} \times S^1$. However, the hemisphere $S$ will not retract to the outside of $Z^{n-1}$ until the element of fundamental group generated by the tube is canceled by the disk labeled $D'^{2}$. The 2-handle of $\mathcal{N}(Z^{n-1})$ with $D'^{2}$ as core surgers the outer boundary from $(\mathbb{S}^1 \times \mathbb{S}^{n-1} \setminus \mathbb{D}^{n-1})$ to $(\mathbb{S}^{n-1} \setminus \mathbb{D}^{n-1}) \cong \mathbb{D}^{n-1}$.

	Using PL coordinates to construct the two distinguished $n$-dim 2-handles in $(\mathcal{N}(Z^{n-1}), Z^{n-1})$ we see that the map giving mapping cylinder coordinates to $(\mathcal{N}(Z^{n-1}),Z^{n-1})$ has the form shown in Figure 14. In particular, restricted to the ``belt region" of the 2-handle, $\mathbb{D}^2 \times \mathbb{S}^{n-3}$, $g\colon \mathbb{S}^{n-1}\rightarrow Z$ assumes the form of first coordinate projection, ``bent" toward the attaching region as shown in Figure 16 (with $\mathbb{D}^2$ dimensionally reduced to $\mathbb{D}^1$). 
\begin{figure}[h!]
\label{lemma-figure}
\centering
\includegraphics[scale=0.35]{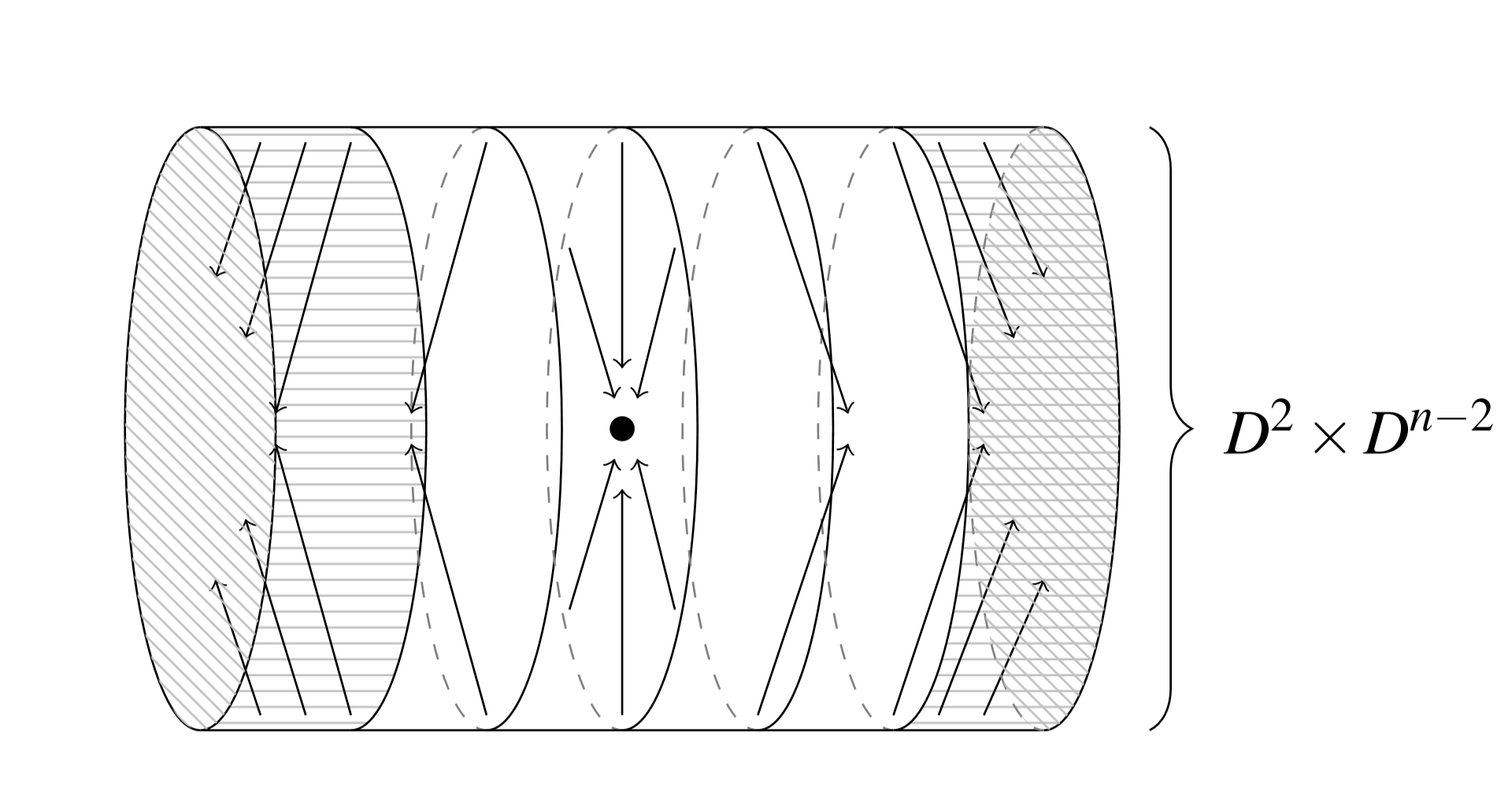}
\caption{Arrows represent the rank-2 map $g$.}
\end{figure}
The shaded boundary collar of the belt parametrizes (under $g$) the attaching region of the distinguished 2-handle.

This completes our sketch of Lemma 1.
\end{proof}

\begin{proof}[Proof sketch of Lemma \ref{making-bh}]
We use the letter $Y$ now also a glyph for the Bing house $Y^{n-1}$. We may now insert into the mapping cylinder structure shown in Figure 16 a copy of the $(n-2)$-dimensional mapping cylinder structure on $(\mathcal{N}(Y^{n-1}), Y^{n-1})$. This is done fiber-wise over each preimage of $p \in D^2$ under the ``bent" $g$ illustrated in Figure 16. The result is:

\begin{figure}[h!]
\label{lemma-2-figure}
\centering
\includegraphics[scale=0.4]{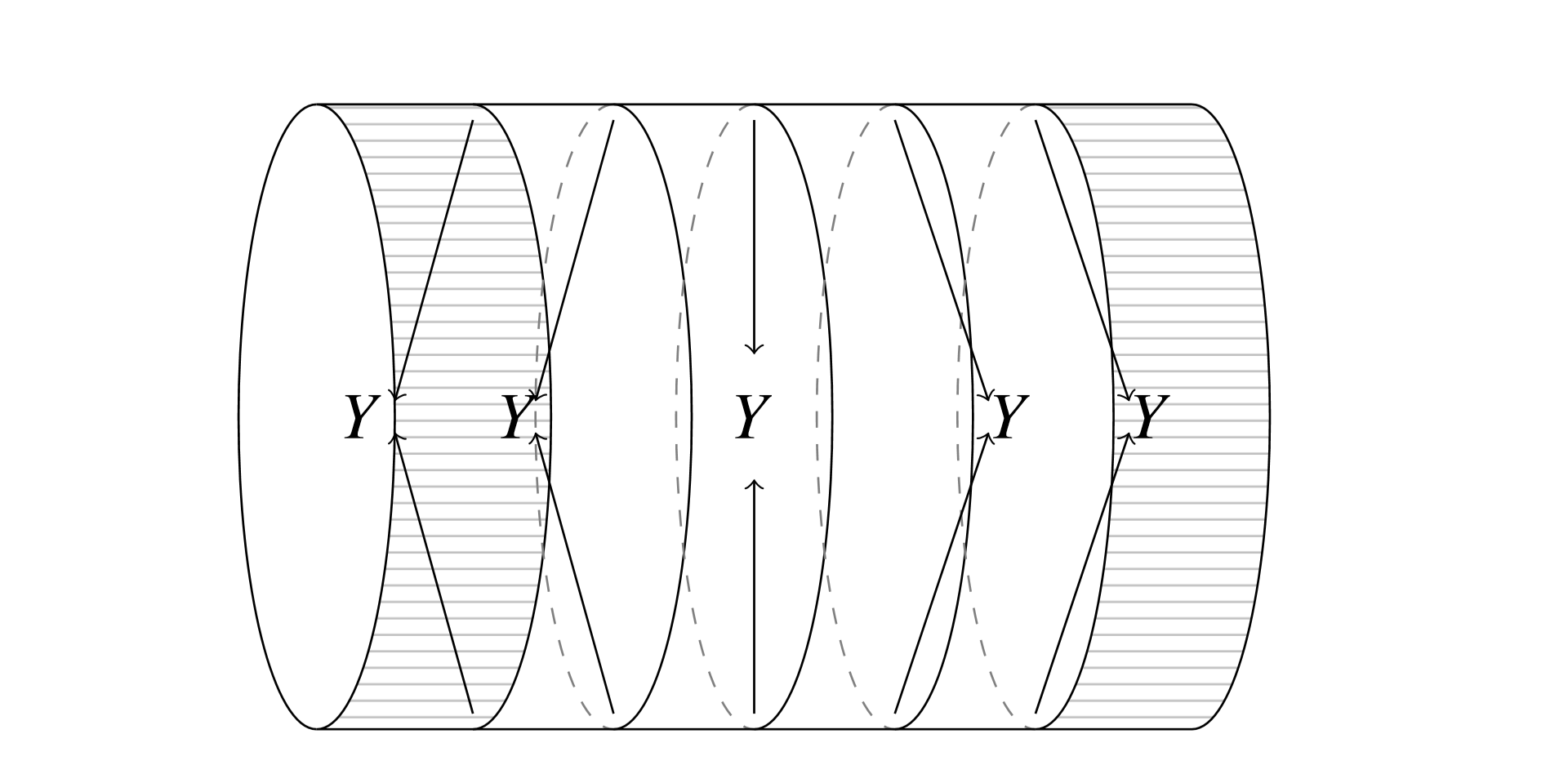}
\caption{}
\end{figure}

This substitution restores the immersive property and the rank-2 hut becomes a house. In this argument it is not necessary to quote the high dimensional PL Poincare conjecture since we have constructed the collapse of $\mathbb{D}^n$ to $Y^{n-1}$ explicitly.
\end{proof}

\section{Appendix}	
	
\begin{theoremA}
A $C^1$-immersion $f \colon \mathbb{S}^2 \looparrowright \mathbb{D}^3$ cannot parametrize a Bing house $Y$.
\end{theoremA}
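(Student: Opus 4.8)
The plan is to exploit the rigidity that $C^1$ (as opposed to merely PL or topological) gives us: a $C^1$-immersion has, at every point of the source, a well-defined tangent plane that varies continuously, so the image near any point of $\mathbb{S}^2$ is locally a $C^1$-submanifold of $\mathbb{D}^3$ carrying a continuous field of tangent 2-planes. I would argue that a Bing house $Y$ cannot be covered by finitely many such immersed $C^1$-sheets in a way compatible with being the image of $\mathbb{S}^2$, because of the local combinatorics forced at the singular set of $Y$. The Bing house $X = Y^2$ (and any 2-dimensional Bing house, since a regular neighborhood being $\mathbb{D}^3$ forces a collapse $\mathbb{D}^3 \searrow Y$ and hence a branching structure) has points where three sheets meet along an arc — the ``triple-curve'' local model, the cone on three points crossed with $\mathbb{R}$. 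The key observation is that over such a triple curve the mapping-cylinder immersion $f\colon \mathbb{S}^2 \to Y$ must fold: a neighborhood of a triple point in $Y$ is covered by sheets of $\mathbb{S}^2$, and matching up the mapping-cylinder lines from the (locally three) complementary wedges of $\mathbb{D}^3$ forces $f$ to map a curve in $\mathbb{S}^2$ to the triple curve with a fold singularity on each side — but a fold is not a $C^1$-immersion, since at a fold the differential drops rank.

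**The steps, in order.** First, I would pin down the local structure of a 2-dimensional Bing house: its underlying complex is a 2-complex whose singular set (non-manifold points) is a graph, and along the interior of each edge of that graph exactly three sheets meet (the ``$Y \times \mathbb{R}$'' model), with isolated vertices where four such edges come together (the cone on the 1-skeleton of a tetrahedron); this is exactly the trichotomy of local models recalled in the $n=3$ discussion of the main proof. Second, I would show that the mapping-cylinder / strong-deformation-retraction structure on $\mathcal{N}(Y) \cong \mathbb{D}^3$ forces, near a generic point of a triple edge, that $\mathbb{S}^2 = \partial\mathcal{N}(Y)$ wraps around so that $f$ restricted to a neighborhood of the relevant curve is (up to homeomorphism) the fold map $\mathbb{R}^2 \to \mathbb{R}^2_{\ge 0}$ crossed with $\mathbb{R}$ — the point is that the three local complementary regions of $Y$ in $\mathbb{D}^3$ have total ``angle'' $2\pi$ worth of boundary $\mathbb{S}^2$ landing on two of the three sheets, so some sheet of $\mathbb{S}^2$ must reverse direction across the triple curve. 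Third, I would observe that any such fold is incompatible with $f$ being a $C^1$-immersion: at a fold point the image of $df$ is a half-plane's worth of directions, hence $df$ is singular there, contradicting immersivity. Finally, I would note the easy fact that a $C^1$-immersion of $\mathbb{S}^2$ parametrizing $Y$ would in particular have to cover the triple curve, so the contradiction is unavoidable; this gives the theorem.

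**The main obstacle** I expect is making the folding argument rigorous rather than heuristic — i.e., proving from the axioms of a Bing house (regular neighborhood a ball, mapping-cylinder structure) that $f$ genuinely has a fold over the triple curve, as opposed to some more exotic non-immersive but non-folding behavior. The clean way is probably a degree/orientation count: orient $\mathbb{S}^2$ and look at a small transversal 2-disk $D$ to a triple edge of $Y$ inside $\mathbb{D}^3$; the preimage $f^{-1}(D \cap Y)$ in $\mathbb{S}^2$ together with the mapping-cylinder lines exhibits $\partial(\mathcal{N}(Y)\cap(D\times\text{edge}))$ as a circle mapping to the tripod $D\cap Y$, and a $C^1$-immersion $\mathbb{S}^1 \to \text{tripod}$ covering all three legs and bounding in the obvious way is impossible — a $C^1$ curve cannot have a corner, yet crossing from one leg of the tripod to another within $\mathbb{S}^1$ forces one. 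One must also dispatch the logical possibility that $Y$ is a 2-dimensional Bing house with \emph{no} triple curves at all; but a 2-complex that is a spine of $\mathbb{D}^3$ and is not collapsible — which is forced here, since a collapsible spine of a ball would make $f$ non-surjective in homology in the wrong way, or more simply since $\mathbb{D}^3 \searrow Y$ with $Y$ a genuine 2-complex requires free-face branching — must have branch curves, so this case is vacuous. I would present this orientation/corner argument as the heart of the proof and keep the local-model classification as a cited consequence of the trichotomy already used in Section~\ref{sec: 3d Bing house}.
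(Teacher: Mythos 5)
Your approach is genuinely different from the paper's, and it is worth separating the sound core from the parts that are misstated.

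\textbf{Where you diverge from the paper.} The paper's argument is global and homological: it introduces the tangent-equivalence relation $T$ on $\mathbb{S}^2$, forms the quotient branched manifold $X=\mathbb{S}^2/T$, observes that the $C^1$ condition forces an upper semi-continuous multiplicity function obeying a train-track additivity rule, regularizes (Lemma B, ``sheet compression'') so that $X$ is a genuine finite branched $2$-complex, and then shows that with weights $m/d$ the image $Y$ carries a $\mathbb{Z}_2$-cycle that is geometrically dual to a loop $\gamma$ meeting it once transversally---a contradiction in $H_*(\mathbb{D}^3;\mathbb{Z}_2)$. Your approach is local: you try to show no $C^1$-immersed sheet can cover a point of the triple curve of $Y$. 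These really are different mechanisms, and the local one---if carried through---would be more elementary; the homological one is more robust, in particular it needs no a priori local model for $Y$, it copes with ``wild'' tangency sets, and it immediately gives the generalizations (Theorem A' and the non-separating remark).

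\textbf{The central misconception.} You repeatedly describe the obstruction as a \emph{fold}, i.e.\ a singularity where $df$ drops rank. That is not what happens. For the \emph{PL} mapping-cylinder map $\partial\mathcal{N}(Y)\to Y$, each local sheet maps PL-homeomorphically onto a ``bent'' union of two of the three half-planes meeting along the triple curve; the differential (in any reasonable sense) has full rank everywhere, but the tangent plane is \emph{discontinuous} across the preimage of the triple curve. This is a corner, not a fold. A $C^1$ immersion has no corners---but that alone only shows the PL mapping-cylinder parametrization is not $C^1$, which is true and uninteresting. What the theorem asserts is that \emph{no} $C^1$ immersion $g:\mathbb{S}^2\to\mathbb{D}^3$ has image equal to $Y$, and your fold heuristic does not address that. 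The argument that does work is about the image, not about any particular parametrization: near a generic point $q$ of the triple curve, $Y$ is three non-coplanar half-planes $A_1,A_2,A_3$ meeting along a line $\ell$; if $g$ is a $C^1$ immersion and $g(p)=q$, then a small neighborhood of $p$ maps onto a $C^1$-embedded disk $D\ni q$ with $D\subset A_1\cup A_2\cup A_3$; since $D\setminus\ell$ is an open $C^1$ surface inside the union of the open half-planes, $T_qD$ must equal one of the planes $\pi_i$ and in particular contain $\ell$; projecting along $\ell$ then reduces to the statement that a $C^1$-immersed arc through the vertex of a tripod is impossible (the one-sided limits of the unit tangent would have to be a ray of the tripod and the negative of a ray, and no two rays of a tripod are antipodal). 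None of this is a rank-drop argument, and you have not written it down.

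\textbf{Further gaps.} (i) Your ``orientation/angle count'' paragraph, meant to make the fold rigorous, is a heuristic about the mapping-cylinder lines and again addresses the wrong statement (the PL parametrization, not an arbitrary $C^1$ one). (ii) You need to handle the degenerate possibility that some pair $A_i,A_j$ is coplanar along a triple edge; then a $C^1$ sheet can pass straight through $\ell$ inside $A_i\cup A_j$, and the obstruction comes instead from the third half-plane $A_k$, whose sheets cannot reach their boundary $\ell$ by an inverse-function-theorem argument. You do not address this. (iii) Your dismissal of the case with no triple curves is not right as stated: a spine like $\mathbb{D}^2\subset\mathbb{D}^3$ is a Bing house by this paper's definition, has no triple curves, and is collapsible, so ``non-collapsibility forces branch curves'' is a non-starter; the correct point there is that a $C^1$ immersion of a closed surface cannot map onto the free edge of a surface-with-boundary (same inverse-function-theorem obstruction). (iv) Finally, you never confront the issue the paper spends most of its effort on, namely that a $C^1$-immersion with no genericity assumption can have wildly tangent sheets; your local argument does sidestep this because it only concerns the image $Y$ and a single sheet through a triple point, but you should say so explicitly rather than leaving it unaddressed.

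In short: the kernel of a correct alternative proof is present (a $C^1$-immersed sheet cannot pass through, or reach, the $1$-dimensional singular locus of a $2$-complex in $\mathbb{R}^3$), but the ``fold'' framing is wrong, the key tangent-plane/inverse-function-theorem lemma is not proved, the degenerate and no-triple-curve cases are mishandled, and the comparison with the paper's actual (homological, train-track, $\mathbb{Z}_2$-cycle) proof should be acknowledged.
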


\begin{proof}
The map $f$ determines the following equivalence relation $T$ on $\mathbb{S}^2$, which we call \emph{tangent-equivalent}. Two points $a, b \in \mathbb{S}^2$ are \emph{tangent-equivalent} if and only if they have identical $1$-jets under $f$. That is, $f(a) = f(b)$ and $df\mid_a = df\mid_b$. Clearly, this is an equivalence relation. 

Let $X =\mathbb{S}^2/T$ be the quotient space. Let $Y$ denote the image of $f$. Since we have no genericity assumptions on $f$, both $X$ and $Y$ may be rather ``wild" topological spaces. To see this, note that for any closed set $C$ we may find a smooth function vanishing to first order precisely on $C$.  An intermediate example between ``wild" and ``reasonable" is the union of the $xy$-plane and the surface obtained by revolving the graph of $z = x^4(\sin(1/x) -1)$ around the $z$-axis. Locally two sheets of $Y$ could meet tangentially at the closed subset consisting of concentric circles of radii $1/k$ for $k\in \mathbb{Z}^+$ and the center of these circles. When $f$ is ``reasonable", one should think of $X$ as a finite $2$-complex.

However, despite how ``wild" $X$ is, there is always  an obvious notion of a tangent space at every point $x\in X$, even when $x$ is not a manifold-point. Thus, there is an obvious notion of a \emph{$C^1$ (or smooth) immersion} $X\looparrowright \mathbb{R}^3$. 

It is also clear that $f$ factors as a composition $f = h\circ g $ of $C^1$ immersions:
$$ \mathbb{S}^2  \stackrel{g}\looparrowright X \stackrel{h}\looparrowright Y.$$

One can think of the space $X$  as a ``wild branched surface"\footnote {It is what is called a \emph{non-singular branched $n$-manifold} in \cite{Williams}.}. 


The equivalence relation $T$ induces a function $m\colon\mathbb{S}^2\rightarrow \mathbb{Z}^+$ from the $2$-sphere to the natural numbers which counts the cardinality of the equivalence class to which each point belongs ($m$ stands for \emph{multiplicity}). The condition that $f$ is $C^1$ has several immediate implications.
\begin{itemize}
\item[1.] An easy compactness argument shows that the range of $f$ is in fact finite.
\item[2.] $m$ is upper semi-continuous\footnote{That is, $m^{-1}((-\infty, a))$ is open in $\mathbb{S}^2$.}. This means that multiplicity can jump down but not up. 
\newline 
\end{itemize}

If  $f$ is reasonable enough so that $X$ is a finite 2-complex, then we can see clearly by sheet counting that the multiplicities $m$ obey the additivity rule\footnote{In other words, the multiplicity function behaves locally as an invariant transverse measure taking values in $2\mathbb{Z}^+$.} at switches familiar from traintracks in one lower dimension.  
Let $d$ be the largest common divisor of $m (\mathbb{S}^2)$. Then the function $(m/d)$ takes integer values, locally obeys additivity and assumes an \emph{odd} value on at least some sheet of $X$ (and therefore some sheet of $Y$). 

The advantage of having an immersion of a branched manifold (that is also a finite complex) is that one can, thinking of it as a homology cycle, divide the multiplicities of the sheets by a common divisor and still get a mod-$2$ homology cycle. This is a difference between smooth immersions and PL immersions (e.g. the classical Bing house is the image of a from $\mathbb{S}^2$ that is generically 2-to-1, but when the sheets of the classical Bing house are labeled without multiplicity, each by 1, they do not constitute a cycle with $\mathbb{Z}_2$, or any other, coefficient system). 

In the case when $X$ is a finite 2-complex, with the weighting $(m/d)$, $X$ and its immersed image $Y$ may be interpreted as a homology cycle with $\mathbb{Z}_2$ coefficients. 
Note that we cannot replace ``$\mathbb{Z}_2$ coefficients" by ``$\mathbb{Z}$ coefficients" since following long loops in $X$ (or $Y$) can reverse a local sheet orientation. In fact, the example of Boy's surface warns us not to expect global additivity with $\mathbb{Z}$ coefficients.

However, $\mathbb{Z}_2$ coefficients are perfectly adequate (in the classical case) to complete the proof for the case when $X$ is a finite 2-complex. Consider a short arc $\alpha$ transverse to a sheet of $Y$ at a point where $h \colon X \looparrowright Y$ is locally $1-1$. Such points are open dense in $Y$ by the $C^1$-condition. Now use the property of any spine of $\mathbb{D}^3$ that its complement has the homotopy type of $\mathbb{S}^2$ (and in particular is connected) to connect the ends of $\alpha$ up by an arc $\beta$ in $\mathbb{D}^3\setminus Y$ to form a simple closed curve $\gamma = \alpha \cup \beta$ meeting the $\mathbb{Z}_2$-cycle $Y$ in a single point. This is a homological contradiction.
\newline

Therefore, all we need do now is to regularize our hypothetical $C^1$-Bing house from a possibly ``wild" Bing house to a finite simplicial $2$-complex whose homological properties are easier to study. If this paper were being written in the 1950’s, it is possible that the authors and auditors alike would know enough about the many variants of (co)homology theories available for the study wild spaces to finish the proof by merely identifying the appropriate replacement for $\mathbb{Z}_2$ singular (or simplicial) homology used for the classical case above. Lacking such scholarship we instead use a simple geometric trick to regularize Y without disturbing its dual loop $\gamma$. More precisely, what is left to prove is the following lemma.

\begin{lemmaB}[Sheet compression]\label{sheet flattening}
 Any $C^1$-immersion $f \colon \mathbb{S}^2 \looparrowright \mathbb{D}^3$ parametrizing a Bing spine is regularly homotopic to a parametrization $f'\colon \mathbb{S}^2 \looparrowright \mathbb{D}^3$ of another Bing spine $Y$ which is an immersion of a $C^1$ branched 2-manifold $X$ that is also a finite 2-complex.
\end{lemmaB}

\begin{proof}
The idea is first to strictly increase the equivalence relation $T$ to $T'$ by building regular homotopy $f_t$ from $f$ to $f'$ so as to ensure that all the nonempty pre-images $m_{f'}^{-1}(n)$, for $n\in 2\mathbb{Z}^+$, are codimension zero PL submanifolds of $\mathbb{S}^2$. After a routine perturbation to make the branching loci (the boundaries of these codimension zero PL submanifolds) of $f'$ mutually transverse, $X$ will be a classical branched surface and $Y$ will be its $C^1$-immersed image. 

By upper-semi-continuity, the set $D_{2n} \subset \mathbb{S}^2$ of highest multiplicity $2n$ (the maximum of $m$) is closed. We now press together the sheets of $f$ near $D_{2n}$ to enlarge $D_{2n}$ to $D'_{2n}$, a codimension zero PL submanifold. Now, $D'_{2n}$ can be constructed in an arbitrarily small $\varepsilon$-neighborhood $N_\varepsilon(D_{2n})$, for some $\varepsilon >0$, i.e. $D'_{2n}\subset N_\varepsilon(D_{2n})$. By picking $\varepsilon$ sufficiently small we can avoid \emph{coincidental} upward jumps in $m$ that would occur if the expanded multiplicity-$2n$ sheets happened to become tangent to other independent sheets. Avoiding this is possible because in a T4-space disjoint closed set can be engulfed in disjoint open sets.

One may proceed inductively downward, next enlarging $D_{2n-2}$ to a  codimension zero PL submanifold $D'_{2n-2}$, using a relative version of the previous argument which leaves $D'_{2n}$ unchanged. This inductive step and the final perturbation mentioned above complete the proof of Lemma B.
\end{proof}

By choosing the short arc $\alpha$ in the interior of its ($\text{odd}\times d$)-weight sheet, all the choices of $\varepsilon'$s in the proof of Lemma B can be made small enough that the successive deformation never touch a small neighborhood of the closed loop $\gamma$. Now we see that $\gamma$ is also geometrically dual to the (singular) $\mathbb{Z}_2$-cycle carried by $Y’$, which is homologically impossible. We have completed the proof of Theorem A. 
\end{proof}

A careful reading of this proof reveals that it uses no low dimensional features. Branched 2-manifolds are merely branched $n$-manifolds when $n=2$ (see \cite{Williams} for an exposition of branched $n$-manifolds). Neither does the proof use the fact that $f$ is a map from $\mathbb{S}^2$ but not any other surface.  
The argument readily generalizes to show: 

\begin{theoremA'}
For any $n\geq 2$, there is never a $C^1$-immersion $M^{n-1} \looparrowright \mathbb{D}^n$ from a closed manifold $M^{n-1}$ parametrizing a Bing house $Y$ in $\mathbb{D}^n$.
\end{theoremA'}

\begin{remark}
The above argument also shows that any smooth (or $C^1$) immersion from a closed manifold $M^n\rightarrow \mathbb{R}^{n+1}$ must have disconnected complement. So the homework problem mentioned at the beginning of the paper is correct if the embedding is replaced by an immersion as long as it is smooth (or $C^1$). The readers who has read up to this point could safely assign it to their students as a homework problem, even with the hypothesis on orientability of $M$ dropped.
\end{remark}

\bibliography{Reference}

\providecommand{\bysame}{\leavevmode\hbox to3em{\hrulefill}\thinspace}
\providecommand{\MR}{\relax\ifhmode\unskip\space\fi MR }
\providecommand{\MRhref}[2]{%
  \href{http://www.ams.org/mathscinet-getitem?mr=#1}{#2}
}
\providecommand{\href}[2]{#2}
\begin{thebibliography}{1}

\bibitem{BuserGromoll}
Peter Buser and Detlef Gromoll, \emph{On the almost negatively curved
  {$3$}-sphere}, Geometry and analysis on manifolds ({K}atata/{K}yoto, 1987),
  Lecture Notes in Math., vol. 1339, Springer, Berlin, 1988, pp.~78--85.
  \MR{961474}

\bibitem{GGNP}
F~Garlaz-Garcia and T.~T. Nguyen-Phan, \emph{The almost negatively curved
  4-sphere (in preparation)}.

\bibitem{Gompf}
Robert~E. Gompf and Andr\'{a}s~I. Stipsicz, \emph{{$4$}-manifolds and {K}irby
  calculus}, Graduate Studies in Mathematics, vol.~20, American Mathematical
  Society, Providence, RI, 1999. \MR{1707327}

\bibitem{LP}
Fran\c{c}ois Laudenbach and Valentin Po\'{e}naru, \emph{A note on
  {$4$}-dimensional handlebodies}, Bull. Soc. Math. France \textbf{100} (1972),
  337--344. \MR{317343}

\bibitem{Trace}
Bruce Trace, \emph{On attaching {$3$}-handles to a {$1$}-connected
  {$4$}-manifold}, Pacific J. Math. \textbf{99} (1982), no.~1, 175--181.
  \MR{651494}

\bibitem{Williams}
R.~F. Williams, \emph{Expanding attractors}, Inst. Hautes \'{E}tudes Sci. Publ.
  Math. (1974), no.~43, 169--203. \MR{348794}

\end{thebibliography}
\bibliographystyle{amsplain}

\end{document}